\documentclass[10pt]{amsart}

%%%%%%%%%%%%%%%%%%%%%%%%%

\usepackage{amsfonts,amsmath,latexsym,amssymb,verbatim,amsbsy,amsthm}
\usepackage{dsfont,bm}
\usepackage{graphicx}
\usepackage{multirow}

 %\usepackage[pagewise]{lineno}\linenumbers
 
%%%%%%%%%%%%%%%%%%%%%%%%%

\usepackage[top=1in, bottom=1in, left=1in, right=1in]{geometry}

\usepackage[dvipsnames]{xcolor}

\usepackage[colorlinks=true, pdfstartview=FitV, linkcolor=RoyalBlue,citecolor=ForestGreen, urlcolor=blue]{hyperref}

% Citation keys in blue, small on the side
%\providecommand*\showkeyslabelformat[1]{{\normalfont \tiny#1}}

% \usepackage[notref,notcite,color]{showkeys}
 % \definecolor{labelkey}{rgb}{0,0,1}

\theoremstyle{plain}
\newtheorem{THEOREM}{Theorem}[section]

\newtheorem{theorem}[THEOREM]{Theorem}

\theoremstyle{definition}

\theoremstyle{remark}

\newtheorem{remark}[THEOREM]{Remark}

%%%%%%%%%%%%  REFs  %%%%%%%%%%%%

\newcommand{\thm}[1]{Theorem~\ref{#1}}

\newcommand{\sect}[1]{Section~\ref{#1}}

%%%%%%%%%%%%%%% GREEKS %%%%%%%%%%%%%%%%

\def \a {\alpha}

\def \g {\gamma}
\def \d {\delta}
\def \e {\varepsilon}

\def \l {\lambda}
\def \n {\nabla}
\def \s {\sigma}

\def \th {\theta}
\def \o {\omega}
\def \w {\omega}

\def \D {\Delta}

\def \O {\Omega}

%%%%%%%%%%%%%%% BOLDS %%%%%%%%%%%%%%%%%%

\def \bk {{\bf k}}
\def \bl {{\bf l}}

%%%%%%%%%%%%% OVERLINES %%%%%%%%%%%

%%%%%%%%%%%%% ROMANS %%%%%%%%%%%

 \def \st {\mathrm{s}}

 \def \en {\mathrm{e}}

%%%%%%%%%%%%% CALIGRAPHICS %%%%%%%%%%%%%%

\def \cD {\mathcal{D}}
\def \cE {\mathcal{E}}

\def \cH {\mathcal{H}}
\def \cI {\mathcal{I}}

\def \cP {\mathcal{P}}
\def \cQ {\mathcal{Q}}
\def \cR {\mathcal{R}}
\def \cS {\mathcal{S}}

%%%%%%%%%%%%% BARS %%%%%%%%%%%%%%

%%%%%%%%%%%%% MATH BOLDS %%%%%%%%%%%%%%%%

\newcommand{\N}{\ensuremath{\mathbb{N}}}   %%% naturals
\newcommand{\Z}{\ensuremath{\mathbb{Z}}}   %%% integers
\newcommand{\R}{\ensuremath{\mathbb{R}}}   %%% reals
   %%% complex
\newcommand{\T}{\ensuremath{\mathbb{T}}}   %%% torus
   %%% field
\newcommand{\E}{\ensuremath{\mathbb{E}}}   %%% expectation
 %%% sphere
  %%% field, force
   %%% unity

%%%%%%%%%%%%% MATHRMs %%%%%%%%%%%%%%%

\def \const {\mathrm{const}}

\def \Lip {\mathrm{Lip}}

%%%%%%%%%%%% MY FUNCTIONS %%%%%%%%%%%%

\newcommand{\jap}[1]{\left\langle #1 \right\rangle}
\newcommand{\ave}[1]{ \left[ #1 \right]}

\DeclareMathOperator{\supp}{supp} %

 %
 %
 %
 %
 %
 %
 %
 %
 %
 %
 %
 %
 %
 %
 %

%%%%%%%%%%%%%%%%  VECTORS  %%%%%%%%%%%%%

%%%%%%%%%%%%%% Shortcuts %%%%%%%%%%%%%%

\def \lan {\langle}
\def \ran {\rangle}

\def \p {\partial}

\def \ss {\subset}

\def \GL {Gr\"onwall's Lemma}

\def \CK{Csisz\'ar-Kullback inequality}

\renewcommand{\geq}{\geqslant}

\renewcommand{\leq}{\leqslant}

%%%%%%%%% DIFFERENTIALS $$$$$$$$$$$$

\def \dg  {\, \mbox{d}\gamma}
\def \dx  {\, \mbox{d}x}

\def \dxi  {\, \mbox{d}\xi}

\def \dr  {\, \mbox{d}r}
\def \domega  {\, \mbox{d}\omega}
\def \ds  {\, \mbox{d}s}

\def \dth  {\, \mbox{d}\th}

\def \domega  {\, \mbox{d}\omega}

\def \dv  {\, \mbox{d} v}

\def \ddt  {\frac{\mbox{d\,\,}}{\mbox{d}t}}

%%%%%%%%%%%%%%%%%%%%%%%%%%%%%

\def \domain {{\O \times \R^n}}

%%%%%%%%%%%%%%%%%%%%%%%%%%%%%%%%

\begin{document}

\title[Modulation analysis]{Modulation of the monokinetic limit for models of collective dynamics}

\author{Alina Chertock}

\address{Department of Mathematics, North Carolina State University, Raleigh, NC, USA}

\email{acherto@ncsu.edu}

\author{Roman Shvydkoy}

\address{851 S Morgan St, M/C 249, Department of Mathematics, Statistics and Computer Science, University of Illinois at Chicago, Chicago, IL 60607}

\email{shvydkoy@uic.edu}

\author{Trevor Teolis}

\address{851 S Morgan St, M/C 249, Department of Mathematics, Statistics and Computer Science, University of Illinois at Chicago, Chicago, IL 60607}

\email{tteoli2@uic.edu}

\subjclass{37A60, 92D50}

\date{\today}

\keywords{collective behavior, emergence, Cucker-Smale system, Vlasov-Alignment equation}

\thanks{\textbf{Acknowledgment.}  
The work of A. Chertock was supported by NSF grant DMS-2208438. The work of R. Shvydkoy was supported in part by NSF
grant  DMS-2405326 and the Simons Foundation. }

\begin{abstract} In this work, we perform modulation analysis of monokinetic limits from the kinetic Cucker-Smale model to the pressureless 
Euler alignment system. Two regimes are considered -- a strong Fokker-Planck force with vanishing noise and Knudsen number, and a pure 
noiseless Vlasov scheme. In the former case, we demonstrate convergence of the modulated profile to the standard Gaussian distribution, 
while in the latter case, the distribution converges to a profile satisfying an explicit transport equation along limiting characteristics.
\end{abstract}

\maketitle 
\section{Introduction}
One of the central models of collective dynamics that has undergone  numerous studies recently is the hydrodynamic pressureless 
Euler-Alignment system given by 
\begin{equation}\label{e:EAS}
\begin{split}
\p_t \rho+ \n\cdot ({u}{\rho}) & = 0, \\
\p_t u + u \cdot \n {u} &  = (u \rho)_\phi - u \rho_\phi.
\end{split}
\end{equation}
Here, $\rho$ is the density of the flock, $u$ its velocity, and $\phi$ is a smooth radially symmetric communication kernel, and we denote 
$\rho_\phi=\rho\ast\phi$ and $(u\rho)_\phi=(u\rho)\ast\phi$. Most commonly, the system is considered on the open space $\O=\R^n$ or 
periodic environment $\O = \T^n$. 

The system models a monokinetic closure of the discrete classical Cucker-Smale alignment system introduced in \cite{CS2007a,CS2007b}:
\begin{equation}\label{e:CS}
\dot{x}_i = v_i, \quad \dot{v}_i =\sum_{j=1}^N m_j \phi(x_i-x_j) (v_j - v_i),
\end{equation}
where a group of $N$ agents with $i$-th agent being represented by its position $x_i$ and velocity $v_i,\ i=1,\ldots,N$. We refer the 
reader to the surveys \cite{ABFHKPPS,Axel97,Ben2005,Edel2001,Darwin,VZ2012,MT2014,MP2018,S-book,Tadmor-notices} for detailed mathematical 
accounts and examples of applications to swarming, satellite navigation, control, etc. 

The kinetic description of the large crowd limit as $N\to 0$ was introduced by Ha and Tadmor in \cite{HT2008} and justified through the 
mean-field limit in \cite{HL2009}. The corresponding Vlasov-alignment equation for a probability distribution of a flock  $f$ is given by
\begin{equation}\label{e:VA}
\p_t f + v \cdot \n_x f =  \n_v( (\rho_\phi v - (u\rho)_\phi ) f ), \quad x\in \O, v\in \R^n.
\end{equation}
Thanks to the maximum principle on the $v$-support of $f$, the equation also satisfies the conditions for  Dobrushin's stability theorem 
stating that for any two measure-valued solutions $\mu',\mu''$ of \eqref{e:VA}, one has
\[
W_1(\mu'(t),\mu''(t)) \leq Ce^{L t}  W_1(\mu'(0),\mu''(0)),
\]
where generally $C,L>0$, and $L=0$ if the kernel $\phi$ is fat-tail, $\int_0^\infty \phi(r) \dr = \infty$, and $W_1$ is the 
Kantorovich-Rubinstein metric (or of Wasserstein-1 distance), see \cite{CCR2011,Ha-stability, S-book}.
It can be formally checked that $(\rho,u)$ is a smooth solution to \eqref{e:EAS}, then the monokinetic ansatz 
$f = {\rho}(x,t) \d_0(v-{u}(x,t))$, is a weak measure-valued solution to \eqref{e:VA}. At the same time, the empirical measure 
$\mu(t) = \sum_i m_i \d_{x_i(t)} \otimes \d_{v_i(t)}$ solves \eqref{e:VA} weakly iff $\{x_i,v_i\}_i$ solve \eqref{e:CS}.  This, in 
particular, establishes a direct connection between solutions of \eqref{e:CS} and the macroscopic system \eqref{e:EAS}, that is, if 
initially
\[
\mu_0 = \sum_i m_i \d_{x_i(0)} \otimes \d_{v_i(0)} \to \rho_0(x) \d_{0}(v - u_0(x))
\]
weakly$^*$,  then the corresponding evolved solutions converge for any time $t>0$ as well, i.e.,
\[
\mu(t) = \sum_i m_i \d_{x_i(t)} \otimes \d_{v_i(t)} \to \rho(x,t) \d_{0}(v - u(x,t)).
\]
The stability analysis was extended to include singular interaction forces in \cite{CCh2021} via the modulated energy method.

A vast body of work has focused on deriving macroscopic models from kinetic equations in the limit of a small Knudsen number.
A representative example is the equation
\begin{equation}\label{e:VAeintro}
\p_t f^\e + v \cdot \n_x f^\e =  \n_v( (\rho^\e_\phi v - (u^\e \rho^\e)_\phi ) f^\e ) + \frac{1}{\e} \cQ(f^\e),
\end{equation}
with $\e$ being a small dimensionless parameter representing the Knudsen number, and the force term $Q$, which depends on the physical 
regime and typically drives the system toward local equilibrium, $\cQ(f) = 0$. The relative entropy method plays a key role in analyzing 
the convergence of $f^\e$ toward equilibrium. In the case of a strong Fokker-Planck force, the collision operator takes the form
\begin{equation}\label{e:FPforce}
Q(f^\e) = \n_v \cdot (\s \n_v f^\e + (v-u^\e) f^\e),
\end{equation}
The first derivation of macroscopic limits without Cucker-Smale-type alignment was given by \cite{BV2005}, leading to the isothermal Euler 
equations with pressure $p=\s\rho$, where $\s$ is a diffusion (or noise) strength parameter. The full model \eqref{e:VAeintro} was studied 
in \cite{KMT2015}, and a more general class of environmental interaction models was considered in \cite{S-EA}. In all these settings, the 
distribution $f^\e$ relaxes to a local Maxwellian:
\begin{equation}
\mu_{\rho,u,\s} =\frac{ \rho(x,t) }{(2\pi \s)^{n/2}} e^{- \frac{|v - u(x,t)|^2}{2\s}}.
\label{e:MaxLim}
\end{equation}
Other studies explored the limit as $\s\to0$, corresponding to purely local alignment. This case, considered in \cite{KV2015} for the 
all-to-all interaction model with $\phi=\l$, exhibits weak convergence toward a monokinetic distribution due to the non-convexity of the 
underlying entropy. The full Vlasov-alignment model was addressed in \cite{FK2019}, where convergence was shown in the weak solution 
framework for non-vanishing densities. More recently, \cite{S-book,S-EA} extended these results to include vacuum states by introducing a 
Favre-type renormalization of the macroscopic velocity, which also guarantees global well-posedness of the limiting system under rough 
local alignment forces; see \sect{s:dwm} for further details.

The results mentioned so far raise an interesting question. In the monokinetic limit, how exactly does the distribution 
$f^\e$ concentrate on the Dirac mass? In other words, if $f^\e = \frac{1}{\w^n} g^\e(t,x,\frac{v-u^\e}{\w})$ for a some 
scaling parameter $\w$, does the profile stabilize $g^\e \to g$ to some smooth $g$? How to determine $g$ from its initial 
profile? These questions present interest from both the theoretical and computational perspectives as rescaling of the stiff 
term $\frac{1}{\e} \cQ(f^\e)$ can reduce computational costs, while developing a stable numerical method for 
\eqref{e:VAeintro}. 

The rescaling analysis was first implemented in the study of asymptotic behavior in \cite{ReyTan2016}, focusing on fat-tailed communication 
kernels. In this setting, the distribution $f$ is expected to converge toward a monokinetic profile centered around the total momentum. In 
\cite{ReyTan2016}, the rescaling involved a scaling parameter, $\w= \w(t,x)$, which evolved according to a transport equation along the 
native macroscopic velocity $u$, and the same velocity $u$ was used in the shift of the profile $g$.  The classical upwind finite-volume 
scheme, which preserved the physical properties of solutions, was employed to solve the rescaled system numerically. For the singular model 
in equation \eqref{e:VAeintro}, a similar rescaling was developed in \cite{CTY2018}, where the analysis not only involved the introduction 
of a rescaling ansatz but also led to the derivation of a new, rescaled system that governs the evolution of the relevant quantities. The 
new system was numerically approximated using an asymptotic preserving (AP) scheme, specifically designed to handle the stiffness of the 
problem while ensuring consistency with the original singular model. In particular, it demonstrated the non-oscillatory character of the 
family of profiles $g^\e$ and velocities $u^\e$, namely $|\n_x g^\e| \leq C_1 g^\e$ pointwise, and $\|\n_x u^\e\|_\infty <C_2$. These 
conditions cannot be proved analytically, but if assumed imply a series of stability properties of the family of solutions, and 
exponentially fast decay of the scaling parameter $\|\w\|_\infty \lesssim e^{-ct/\e}$. 

The goal of this present paper is twofold. First, we present a new monokinetic limit, where the equation is equipped with the full Fokker-
Planck force \eqref{e:FPforce} and the Knudsen and noise parameters vanish simultaneously, $\e , \s \to 0$.  We show that the monokinetic 
limit $f^\e \to  {\rho}(x,t) \d_0(v-{u}(x,t))$ still holds in the Wasserstein-2 metric and we quantify the metric itself in terms of 
$\s,\d,\e$, see \thm{t:FPlimit}. We also show that the rescaled distribution
\[
f^\e(t,x,v) = \frac{1}{\s^{n/2}} g^\e\left(t,x, \frac{v - m^\e(t,x)}{\s^{1/2}}\right)
\]
converges to the standard Gaussian $g^\e \to \mu$ --  a much more specific description than previously known. Here, the novel idea is to use 
a modulated velocity field $m^\e$ instead of the native one $u^\e$. The modulated field eliminates stiff terms from the momentum equation 
$\rho^\e u^\e$, and centers the distribution $f^\e$ more naturally, which results in the explicitly defined strong force in the 
$g^\e$-equation, $\frac{1}{\e}( \D_\xi g^\e + \n_\xi(\xi g^\e))$, pushing it towards the Gaussian. We refer to \thm{t:hydroMax} for full 
details.

Second, we implement the modulation analysis on the more traditional  Vlasov scheme, $\s = 0$. Here, the scheme allows for the complete 
elimination of stiff terms from the $g^\e$-equation, as seen in \eqref{e:ge}, and the scaling parameter $\s: = \w^\e \to 0$ vanishes 
exponentially fast with $\e$. The modulated profile stabilizes to a smooth distribution $g^\e \to g$, satisfying the transport equation
\[
\p_t g + \n_x \cdot  ( u g )   =  \n_\xi \cdot ( (\xi \cdot \n  u +  \xi \rho_\phi) g ),
\]
where $u,\rho$ are the solutions to the limiting pressureless Euler-alignment system \eqref{e:EAS}, see \thm{t:Vlasov-g}. The asymptotic 
behavior of $g$ is analytically and numerically tractable. In particular, for unidirectional flocks, we demonstrate in \sect{s:uni} that $g$ 
aggregates around the kinetic origin $\xi =0$ exponentially fast. 

It should be noted that the stability results of \thm{t:hydroMax} and \thm{t:Vlasov-g} are proved under no-oscillation conditions on the 
macroscopic field $u^\e$.  While necessary for our analysis, these conditions are less stringent than those implemented and numerically 
verified in \cite{CTY2018}.  At this moment, no unconditional stabilization of a modulated profile $g^\e$ is known for any of the schemes 
proposed.
 
\subsection{Notation} We denote $(\cdot, \cdot)_{\rho}$  the $L^2$ inner product with respect to the measure $\rho$.  We also denote 
\[
L^p(\rho) = \{ f \in \cD': \int_{\T^n} |f|^p \rho \dx <\infty\}.
\]
We will utilize Wasserstein metrics to quantify the weak convergence of measures. For the Wasserstein-1 metric on the space of probability 
measures $\cP(\O)$, where $\O$ is a Borel space, we resort to the dual (Kantorovich-Rubinshtein) definition 
\begin{align*}
W_1(\mu, \nu) = \sup_{\| \n g \|_\infty \leq 1} \big| \int_{\O} g(\o) (d\mu(\o) - d\nu(\o)) \big|.
\end{align*}
We also use the classical definition of the Wasserstein-2 metric: 
\begin{equation*}
W_2^2(\mu, \nu)= \inf_{\gamma \in \Pi(\mu, \nu)} \int_{\O \times \O} |w_1 - w_2|^2 \mbox{d}\gamma(w_1, w_2) ,
\end{equation*}
where $\Pi(\mu, \nu)$ is the set of measures with marginals $\mu$ and $\nu$, see \cite{Villani-optimal}.

\section{Monokinetic limit via Fokker-Planck scheme}
Let us consider the following model on the torus $\O = \T^n$,
\begin{equation}\label{e:FPe}
\begin{split}
\p_t f^\e + v \cdot \n_x f^\e & =\n_v( (\rho^\e_\phi v - (u^\e \rho^\e)_\phi ) f^\e) + \frac{1}{\e}[  \n_v( (v - u_\d^\e) f^\e) 
+ \s \D_v f^\e ], \\
f^\e_0(x,v) & =\mu_{\rho_0,u_0,\s}.
\end{split}
\end{equation}

The model depends on three positive parameters -- Knudsen number $\e$, noise $\s$, and resolution or mollification parameter $\d$, 
which we introduce next.  Although the solution $f^\e$ of course depends on all three parameters $f^\e = f^{\e,\d,\s}$, we keep the notation 
short since eventually $\d$ and $\s$ will be subordinated to $\e$.

Next, we properly introduce the mollification $u_\d^\e$ and discuss well-posedness of the Cauchy problem \eqref{e:FPe}.

\subsection{Density-weighted mollification}\label{s:dwm}
We start by introducing a proper mollification of the field $u$ used in \eqref{e:FPe}.  To this end, we fix a smooth mollifier with an 
algebraic decay, e.g.  
\begin{equation}\label{ }
\psi(x) = \frac{c_n}{\lan x \ran^{n+\a}}, \quad  \a>0,
\end{equation}
where $c_n$ is a normalization factor and $\lan\cdot\ran$ denote the Japanese brackets, and let 
\begin{equation}\label{ }
\psi_\d(x) = \frac{1}{\d^n} \sum_{k\in \Z^n} \psi\left(\frac{x+2\pi k}{\d}\right).
\end{equation}
Note that 
\begin{equation}\label{e:psilow}
\psi_\d(x) \gtrsim \d^\a, \quad \forall x\in \T^n.
\end{equation}

We then define  $u_\d$ to be the average of $u$ based on the following protocol,
\begin{equation}\label{e:dave}
u_\d =  \left( \frac{(u \rho) \ast {\psi_\d}}{\rho\ast {\psi_\d}} \right) \ast {\psi_\d}.
\end{equation}
Many properties and applications of this filtration are presented in \cite{S-EA,S-book}. In particular,
\begin{itemize}
\item symmetry relative to the $\rho$-weighted inner product 
\begin{equation}\label{ }
(u_\d, v)_\rho = (u, v_\d)_\rho.
\end{equation}
\item positive-semidefinite 
\begin{equation}\label{e:ball}
(u_\d,u)_{\rho} \geq (u_\d,u_\d)_{\rho}.
\end{equation}
\item approximation without any reliance on regularity of $\rho$: for any $u\in \Lip$ and any $1\leq p \leq \infty$ one has
\begin{equation}\label{e:mollest}
\| u_\d - u \|_{L^p(\rho)} \leq C \d \|u\|_{\Lip},
\end{equation}
where $C>0$ is a constant depending only on the kernel $\psi$ and $p$.
\end{itemize}
Let us note that $u_\d$ is infinitely smooth and given \eqref{e:psilow} we have the following estimate
\begin{equation}\label{ }
|\p^k u_\d| = \left|  \left( \frac{(u \rho) \ast {\psi_\d}}{\rho\ast {\psi_\d}} \right) \ast {\p^k \psi_\d} \right| 
\lesssim \d^{-k} \left\| \frac{(u \rho) \ast {\psi_\d}}{\rho\ast {\psi_\d}} \right\|_\infty 
\lesssim \d^{-k-\a} \left\| (u \rho) \ast {\psi_\d} \right\|_\infty \leq \d^{-k-\a} E^\frac12,
\end{equation}
where $E$ is the macroscopic energy
\[
E = \frac12 \int_\O |u|^2 \rho \dx.
\]
So, we obtain
\begin{equation}\label{e:uder}
\| \p^k u_\d \|_\infty \leq c \d^{-k-\a} E^\frac12.
\end{equation}

\subsection{Global well-posedness}
The alignment force in \eqref{e:FPe} is a superposition of the classical Cucker-Smale model with the excited mollified local alignment 
$\frac{1}{\e} u_\d$. For any fixed $\e,\d,\s>0$, such an alignment force falls under the class of uniformly regular models with the strength 
$\st_\rho = \rho_\phi + \frac{1}{\e}$ being bounded away from zero uniformly, see \cite{S-EA}. We also have an easier form of the constant 
thermolization parameter $\s / \e >0$ independent of the density. Hence, the results of \cite{S-EA} apply to prove global well-posedness 
from any weighted Sobolev data. To be precise, define 
\begin{equation}\label{e:Sobdef}
H^{m}_q(\domain) =  \left\{ f :  \sum_{ 2|\bk| + | \bl | \leq 2m}   \int_\domain  \jap{v}^{q - 2|\bk| - | \bl |  } 
| \p^{\bk}_{x} \p_v^{\bl} f |^2 \dv\dx <\infty \right\}.
\end{equation}
Note that the data in \eqref{e:FPe} belongs to any $H^m_q$ and is automatically thick in the sense that $\st_\rho > c_0$.  We have the following result.
\begin{theorem}
The Cauchy problem has a unique global solution $f \in C([0,T);H^m_q)$ for any $m,q\in \N$ and $T>0$. 
\end{theorem}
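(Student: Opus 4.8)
The plan is to invoke the general well-posedness theory for uniformly regular alignment models developed in \cite{S-EA}, and to verify that the system \eqref{e:FPe} satisfies all the required hypotheses with constants that are uniform on any finite time interval. The first step is to rewrite the right-hand side of \eqref{e:FPe} in the canonical form treated there, namely as a transport-diffusion operator $\n_v\cdot(\s_\rho(v-u_\rho)f + \s_\th \n_v f)$ with an alignment velocity $u_\rho$ built from a superposition of two averaging operations. Here the strength is $\st_\rho = \rho_\phi + \frac1\e$, the effective alignment velocity is the convex combination $u_\rho = \st_\rho^{-1}\big( (u^\e\rho^\e)_\phi/\rho^\e \cdot \rho_\phi + \frac1\e u_\d^\e\big)$ (so that $\st_\rho u_\rho = (u^\e\rho^\e)_\phi + \frac1\e u^\e_\d \rho^\e$ after the density factor is distributed correctly), and the thermalization coefficient is the constant $\s/\e$. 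The key structural observations are: (i) $\st_\rho \geq \frac1\e =: c_0 > 0$ uniformly in $x$, so the model is \emph{thick}; (ii) both the Cucker-Smale average $(\cdot)_\phi$ and the density-weighted mollification $(\cdot)_\d$ are bounded, symmetric with respect to $(\cdot,\cdot)_\rho$, and positive-semidefinite in the sense of \eqref{e:ball}; and (iii) by \eqref{e:uder} the smoothed field $u_\d^\e$ enjoys bounds $\|\p^k u_\d^\e\|_\infty \lesssim \d^{-k-\a} E^{1/2}$, which together with conservation (or at least non-increase) of the macroscopic energy $E$ along the flow gives control of all spatial derivatives of the alignment field by a constant depending on $\e,\d,\s$ but not on $t$.

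The second step is to set up the a priori energy estimates in the weighted Sobolev spaces $H^m_q$. One differentiates the equation $2|\bk|+|\bl|$ times, pairs against $\jap{v}^{q-2|\bk|-|\bl|}\p^{\bk}_x\p^{\bl}_v f$, and tracks the contributions: the free transport term $v\cdot\n_x f$ produces, after integration by parts, only lower-order terms absorbed by the weight shift (this is exactly the mechanism that forces the pairing $2|\bk|+|\bl|$ between the weight exponent and the derivative count in \eqref{e:Sobdef}); the Fokker-Planck diffusion $\frac\s\e\D_v f$ gives a good sign (a coercive $\|\n_v \p^{\bk}_x\p^{\bl}_v f\|^2$ term) plus commutator terms with the polynomial weight that are again lower order; the drift $\frac1\e\n_v\cdot((v-u_\d^\e)f)$ contributes a term $\frac{n}{\e}$ times the norm (from $\n_v\cdot v$) with a favorable sign and commutators involving derivatives of $u_\d^\e$, bounded via \eqref{e:uder}; and the Cucker-Smale term is smoothing in $x$ (convolution with $\phi$) so it is straightforwardly controlled by $\|f\|_{H^m_q}$ times a constant depending on $\|\phi\|_{C^m}$ and the conserved mass. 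Assembling these yields a differential inequality $\ddt \|f\|_{H^m_q}^2 \leq C(\e,\d,\s,m,q)\|f\|_{H^m_q}^2$, and Grönwall's Lemma gives the a priori bound on $[0,T)$ for every $T>0$.

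The third step is the standard continuation argument: local existence and uniqueness follow from a fixed-point/regularization scheme (mollify the kernel and the diffusion, solve the linear problem, pass to the limit) exactly as in \cite{S-EA}; because $\psi_\d$ is smooth the alignment field $u^\e_\d$ depends continuously on $f$ in the relevant topology, and the thickness bound $\st_\rho \geq \frac1\e$ prevents any degeneration of the parabolic part, so there is no obstruction to well-posedness of the linear problem. The a priori bound from step two then rules out finite-time blow-up of the $H^m_q$ norm, so the local solution extends to $[0,T)$ for arbitrary $T$, and uniqueness propagates by an energy estimate on the difference of two solutions (using that $u\mapsto u_\d$ and $u\mapsto u_\phi$ are Lipschitz on bounded sets). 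One point worth remarking is that the initial datum $\mu_{\rho_0,u_0,\s}$ is a Gaussian in $v$ with $x$-dependent parameters, hence lies in $H^m_q$ for every $m,q$ provided $\rho_0,u_0$ are smooth, and its strength satisfies $\st_\rho \geq \frac1\e > 0$, so the hypotheses of the cited theorem are met; this is the content of the sentence preceding the statement.

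The main obstacle is not any single estimate but rather the bookkeeping needed to confirm that the \emph{superposition} alignment force — the classical Cucker-Smale interaction plus the stiff term $\frac1\e u^\e_\d$ — still falls within the "uniformly regular / thick" class of \cite{S-EA} with the averaging operator being a genuine convex combination that retains symmetry and positivity with respect to $(\cdot,\cdot)_\rho$; once that structural check is done, the energy estimates are the routine weighted-Sobolev computations sketched above and the global existence follows from the cited machinery. I would therefore spend the bulk of the write-up on verifying the class membership and the uniform-in-time control of $\|\p^k u^\e_\d\|_\infty$ via energy conservation, and merely cite \cite{S-EA} for the local theory and the continuation principle.
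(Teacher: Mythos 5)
Your proposal takes exactly the route the paper does: the theorem is proved by observing that the superposed alignment force has strength $\st_\rho = \rho_\phi + \tfrac1\e$ bounded away from zero (thickness), that the thermalization parameter $\s/\e$ is a positive constant, and that the Gaussian data lies in every $H^m_q$, so the global well-posedness machinery for uniformly regular models in \cite{S-EA} applies directly. The additional detail you supply on the weighted-Sobolev energy estimates and the continuation argument is consistent with (and fills in) what the paper simply cites.
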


\subsection{Convergence of the Fokker-Planck scheme}
A traditional way of showing that $f^\e \to \mu_{\rho,u,\s}$ in the Fokker-Planck scheme is to consider the relative entropy 
\begin{equation}\label{ }
\cH(f^\e | \mu_{\rho,u,\s})  =  \s \int_\domain f^\e \log \frac{f^\e}{\mu_{\rho,u,\s}} \dv\dx
\end{equation}
which by the \CK\ dominates strong convergence
\[
\cH(f^\e | \mu_{\rho,u,\s}) \geq \s \| f^\e -  \mu_{\rho,u,\s}\|_1^1.
\]
In the vanishing noise limit $\s \to 0$, such control over the strong distance is obviously lost, and so we can no longer fully rely on the 
relative entropy method.  One component can be salvaged as $\s \to 0$, however, and that is the modulated energy 
\begin{equation}\label{ }
\en(f^\e | u) =	\frac{1}{2} \int_{\domain} | v - u|^2 f^\e \dv \dx. 
\end{equation}
We use it as a measure of deviation of $f$ from the monokinetic distribution around the limiting velocity $u$, along with the standard 
Wasserstein distances.  

\begin{theorem}\label{t:FPlimit} Let $f^\e$ be a classical solution to the Cauchy problem \eqref{e:FPe}, and let $u,\rho$ be a classical solution to \eqref{e:EAS} on a time interval. Then  the following bounds hold on $[0,T]$:
\begin{equation}\label{e:distW}
W^2_2(f^\e,f) + \en(f^\e | u) + W^2_2(\rho^\e,\rho) + W^2_1(u^\e \rho^\e,u \rho) \leq  C( \s \log\frac{1}{\s} + \e + \frac{\d}{\e} )
\end{equation}
where $C>0$ depends only on $T$ and the limiting solution $u,\rho$.
\end{theorem}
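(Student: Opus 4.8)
The plan is to run a combined relative-entropy / modulated-energy argument and close a Grönwall inequality on $[0,T]$ in which the stiff Fokker--Planck term supplies the $\tfrac1\e$-coercivity. First I would reduce the four quantities in \eqref{e:distW} to three master objects: the modulated energy $\en(f^\e|u)$, the kinetic relative entropy $\cH(f^\e|\mu_{\rho^\e,u^\e,\s})$ of $f^\e$ against the Maxwellian $\mu_{\rho^\e,u^\e,\s}$ matching the density and momentum of $f^\e$, and the Wasserstein distance $W_2^2(\rho^\e,\rho)$. Indeed $W_2(\rho^\e,\rho)\le W_2(f^\e,f)$ by projection, while conversely coupling the $x$-marginals optimally and collapsing each velocity fibre onto $u(x)$ gives $W_2^2(f^\e,f)\le C(W_2^2(\rho^\e,\rho)+\en(f^\e|u))$; splitting the momentum flux as $(v-u)+u$ likewise gives $W_1^2(u^\e\rho^\e,u\rho)\le C(\en(f^\e|u)+W_2^2(\rho^\e,\rho))$, and $W_2^2(\mu_{\rho,u,\s},f)=n\s$ exactly, where $f=\rho\,\d_0(v-u)$ is the monokinetic measure. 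Writing $\en(f^\e|u)=\en(f^\e|u^\e)+\tfrac12\|u^\e-u\|_{L^2(\rho^\e)}^2$, with $\en(f^\e|u^\e)=\tfrac n2\Theta^\e$ the thermal energy ($\Theta^\e$ the $\rho^\e$-averaged temperature, which relaxes to $\s$ and whose deviation $|\Theta^\e-\s|$ is itself controlled by $\cH(f^\e|\mu_{\rho^\e,u^\e,\s})$ via a Csisz\'ar--Kullback-type bound), the logarithm in \eqref{e:distW} is the residue of the Gaussian normalization $-\tfrac{n\s}{2}\log(2\pi\s)$ incurred when passing from these entropy-type quantities to $L^2$ norms; the other entropy terms contribute only $O(\s)$, via the Gaussian lower bound on differential entropy together with a uniform energy bound for $f^\e$.

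The core is the differential inequality for $\Psi:=\cH(f^\e|\mu_{\rho^\e,u^\e,\s})+\tfrac12\|u^\e-u\|_{L^2(\rho^\e)}^2+W_2^2(\rho^\e,\rho)$. Differentiating along \eqref{e:FPe}--\eqref{e:EAS} and integrating by parts, the stiff term contributes to $\tfrac{d}{dt}\cH(f^\e|\mu_{\rho^\e,u^\e,\s})$ the $v$-relative Fisher information $-\tfrac{\s^2}{\e}\cI_v(f^\e|\mu_{\rho^\e,u_\d^\e,\s})$ toward the \emph{filtered} target $u_\d^\e$, together with $+\tfrac1\e\|u^\e-u_\d^\e\|_{L^2(\rho^\e)}^2$, and it contributes to $\tfrac{d}{dt}\tfrac12\|u^\e-u\|_{L^2(\rho^\e)}^2$ (through the momentum balance of \eqref{e:FPe}) the term $-\tfrac1\e\|u^\e-u_\d^\e\|_{L^2(\rho^\e)}^2-\tfrac1\e(u_\d^\e-u,\,u^\e-u_\d^\e)_{\rho^\e}$. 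The two $\tfrac1\e\|u^\e-u_\d^\e\|^2$ contributions cancel --- this cancellation is exactly the design principle of the density-weighted filtered alignment of \sect{s:dwm}. For the surviving Fisher term, the conditional log-Sobolev inequality for the Gaussian (constant $\propto\s$) together with the identity $\cH(f^\e|\mu_{\rho^\e,u_\d^\e,\s})=\cH(f^\e|\mu_{\rho^\e,u^\e,\s})+\tfrac12\|u^\e-u_\d^\e\|_{L^2(\rho^\e)}^2$ yields $-\tfrac{\s^2}{\e}\cI_v(f^\e|\mu_{\rho^\e,u_\d^\e,\s})\le-\tfrac2\e\cH(f^\e|\mu_{\rho^\e,u^\e,\s})-\tfrac1\e\|u^\e-u_\d^\e\|_{L^2(\rho^\e)}^2$, i.e.\ the coercive $-\tfrac2\e\cH$ plus a bonus dissipation of the filtration gap (a fraction of which is kept back to control the stress). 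The leftover cross term equals $\tfrac1\e(u-u_\d^\e,\,u^\e-u_\d^\e)_{\rho^\e}$; by symmetry and linearity of the filtration, and \eqref{e:ball}, it is $\le\tfrac1\e(u-u_\d,\,u^\e)_{\rho^\e}$ with $u_\d$ the filtration of the \emph{limiting} Lipschitz field against $\rho^\e$, hence $\le\tfrac C\e\,\d\|u\|_{\Lip}\|u^\e\|_{L^2(\rho^\e)}=O(\tfrac\d\e)$ by \eqref{e:mollest} and the energy bound. The remaining non-stiff transport, Cucker--Smale, stress and temperature terms are handled as in the classical relative-entropy treatment of hydrodynamic limits: rearranged into modulated form they are $\le C(T)\Psi$, the viscous/stress contribution being absorbed into the reserved fraction of the Fisher dissipation; and, the limit being pressureless, $\mu_{\rho,u,\s}$ solves \eqref{e:FPe} only up to the missing pressure $\s\n_x\rho=O(\s)$, with an $O(\e)$ closure error. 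Altogether $\Psi'\le-\tfrac c\e\cH(f^\e|\mu_{\rho^\e,u^\e,\s})+C(T)\Psi+C(T)(\s+\e+\tfrac\d\e)$; since the data is exactly Maxwellian, $\Psi(0)=0$, so Grönwall gives $\Psi(t)\le C(T)(\s+\e+\tfrac\d\e)$ on $[0,T]$, and feeding this back through the reductions above (paying the $O(\s\log\tfrac1\s)$ in the entropy-to-$L^2$ step) yields \eqref{e:distW}.

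I expect the main difficulty to be the bookkeeping of the $\tfrac1\e$-singular terms. The Fokker--Planck dissipation is naturally attached to the relative Fisher information toward the filtered Gaussian $\mu_{\rho^\e,u_\d^\e,\s}$, not toward any Maxwellian one directly compares with; it must be routed in three ways at once --- into the coercive damping of $\cH(f^\e|\mu_{\rho^\e,u^\e,\s})$ via log-Sobolev, into the dissipation of $\|u^\e-u_\d^\e\|^2$ that cancels the stiff cross term, and into the control of the stress --- and one must verify that no residual $\tfrac1\e$-singular contribution survives except for the benign $O(\tfrac\d\e)$ source. This is precisely where the linearity, $L^2(\rho^\e)$-contractivity, symmetry and positive-semidefiniteness of the density-weighted filtration, and the uniform energy bound, all enter; and --- crucially, since the estimate is unconditional --- it has to be done with no a priori oscillation or Lipschitz control on $u^\e$. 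A secondary point is the customary rearrangement of the transport and alignment flux terms so that they enter $\Psi'$ to first order in $\Psi$ rather than as $\sqrt\Psi$, which would otherwise leave an $O(1)$ source in the Grönwall loop, together with careful tracking of the $\s$-powers to obtain the form $\s\log\tfrac1\s$ in the final bound.
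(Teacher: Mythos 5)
Your overall architecture is the right one and matches the paper's: a Gr\"onwall loop on an entropy/modulated-energy functional coupled with a Lagrangian bound on $W_2^2(\rho^\e,\rho)$, with the stiff cross term $\tfrac1\e(u-u_\d^\e,u^\e-u_\d^\e)_{\rho^\e}$ reduced to $O(\d/\e)$ via the symmetry and positive-semidefiniteness \eqref{e:ball} of the filtration together with \eqref{e:mollest} --- that is exactly how the paper produces the $\d/\e$ source. Your bound $W_2^2(f^\e,f)\le C(W_2^2(\rho^\e,\rho)+\en(f^\e|u))$ by coupling the $x$-marginals and collapsing each velocity fibre onto $u(x)$ is a legitimate and arguably more elementary alternative to the paper's coupled Fokker--Planck / stochastic-characteristics construction, and your triangle-inequality treatment of $W_1(u^\e\rho^\e,u\rho)$ is the paper's. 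The log-Sobolev coercivity $-\tfrac c\e\cH$ is not needed here (the paper simply discards the Fisher information as nonpositive, reserving a fraction to absorb the stress $\int\n u:\cR_\e$); it is only invoked later, for Theorem~\ref{t:hydroMax}.

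There is, however, one concrete step that fails as written. You choose as master quantity the relative entropy against the \emph{self-consistent} local Maxwellian, $\cH(f^\e|\mu_{\rho^\e,u^\e,\s})$, and differentiate it along the flow. Expanding the logarithm, this functional contains the macroscopic entropy $-\s\int_\O\rho^\e\log\rho^\e\dx$, and the continuity equation gives
\begin{equation*}
\frac{\dd}{\dd t}\Bigl(-\s\int_\O\rho^\e\log\rho^\e\dx\Bigr)=-\s\int_\O u^\e\cdot\n\rho^\e\dx=\s\int_\O\rho^\e\,\n\cdot u^\e\dx .
\end{equation*}
This term is absent from your ledger of stiff and non-stiff contributions, and it cannot be bounded by $C\Psi+C(\s+\e+\tfrac\d\e)$: there is no uniform-in-$\e$ control of $\n u^\e$ or $\n\rho^\e$ available, and --- as you yourself emphasize --- the estimate must be unconditional, so the oscillation hypotheses \eqref{e:non-osc} that would tame it are off the table. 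The $\s$ prefactor does not save you, since $\n\cdot u^\e$ may a priori grow faster than $\s^{-1}$. The repair is precisely the paper's choice of functional: use the full kinetic Boltzmann entropy $\s\int f^\e\log f^\e\dv\dx$ (which is transported without production by the $x$-advection and picks up only $v$-divergence terms) plus the modulated energy $\en(f^\e|u)$ centered at the \emph{limiting} field, so that the only macroscopic logarithm ever differentiated is $\log\rho$ of the smooth limit. With that substitution your computation goes through, $\Psi$ differs from the paper's $\cH_\e+D$ only by harmless additive constants, and the $\s\log\tfrac1\s$ then enters not through an ``entropy-to-$L^2$'' conversion but simply as the initial Boltzmann entropy $-\tfrac n2\s\log(2\pi\s)$ of the Gaussian data propagated through Gr\"onwall. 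A minor further point: $W_2^2(\rho^\e,\rho)$ should not itself be differentiated; one differentiates the specific Lagrangian coupling cost $D=\int|X^\e-X|^2\rho_0\dx$ which dominates it.
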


We can see from \eqref{e:distW} that the optimal resolution for the velocities is $\d = \e^2$. Then, as long as $\e ,\s \to 0$, while not 
necessarily related to each other, the corresponding solution $f^\e$ along with its macroscopic components concentrates on the limiting 
monokinetic solution. Notably, when the rates are equal $\s \log\frac{1}{\s} = \e$, which corresponds to the fastest rate, the system 
\eqref{e:FPe}  has a noise strength $\frac{\s}{\e} = \frac{1}{ \log\frac{1}{\s} } $ that is vanishing as $\s \to 0$.

\begin{proof}
To start, we introduce a hybrid version of the modulated energy which contains the Boltzmann entropy component,
\begin{equation}\label{ }
\cH_{\e} = \s \int_\domain f^\e \log f^\e \dv\dx + \en(f^\e | u).
\end{equation}
This will be the pivotal quantity that controls other distances of interest. So, let us develop an equation for it. 
First, we break it into  kinetic and macroscopic parts,
\begin{equation*}\label{}
\begin{split}
 \cH_{\e} & =  \cH^{kin}_{\e} + \cH^{macro}_{\e} \\
 \cH^{kin}_{\e} & =  \s \int_\domain f^\e \log f^\e \dv\dx + \frac{1}{2} \int_{\domain} | v |^2 f^\e \dv \dx\\
 \cH^{macro}_{\e} & =  \frac{1}{2}\int_{\O}  \rho^\e |u|^2 \dx - \int_{\O} \rho^\e u^\e \cdot u \dx.
\end{split}
\end{equation*}
In what follows, we will heavily refer to \cite{S-EA} for details. Let us start with the kinetic part:
\begin{equation}\label{e:He}
\begin{split}
\dot{ \cH}^{kin}_\e  = &- \frac{1}{\e } \int_{\domain} \frac{|\s \n_v f^\e + (v -u^\e_\d) f |^2}{f^\e}   \dv\dx 
- \frac{1}{\e } [ (u^\e_\d,u^\e)_{\rho^\e} - (u^\e_\d,u^\e_\d)_{\rho^\e}]\\
& - \int_{\domain} ( \rho^\e_\phi  v - (u^\e \rho^\e)_{\phi}) \cdot (\s \n_v f^\e + v f^\e ) \dv\dx.
\end{split}
\end{equation}
Here, the first two terms are non-positive, and integrating by parts in the third term, we obtain
\begin{equation*}\label{}
\begin{split}
\dot{ \cH}^{kin}_\e & \leq n \s \int_{\O}  \rho^\e_\phi  \rho ^\e \dx -  \int_{\domain}  \rho^\e_\phi  |v|^2 f^\e \dv\dx + \int_{\O}  (u^\e \rho^\e)_{\phi} \cdot u^\e \rho^\e \dx\\
&  \leq c \s - \int_{\O}  \rho^\e_\phi  |u^\e|^2 \rho^\e \dx + \int_{\O}  (u^\e \rho^\e)_{\phi} \cdot u^\e \rho^\e \dx.
\end{split}
\end{equation*}
Since the Cucker-Smale model is contractive, see \cite{S-EA}, the last two terms add up to a non-positive value.
Hence,
\[
\dot{ \cH}^{kin}_\e \leq c \s,
\]
while initially,
\[
\cH_\e(0) = \s \int_\O \rho_0 \log \rho_0 \dx - \frac{n}{2} \s \log (2\pi \s) \lesssim \s \log\frac{1}{\s},
\]
and so,
\[
\cH^{kin}_\e(0) = \cH_\e(0) - \cH^{macro}_\e(0)  \leq \s \log\frac{1}{\s} +c \leq C.
\]
Thus,
\begin{equation}\label{ }
\sup_{t\in [0,T]} { \cH}^{kin}_\e \leq C.
\end{equation}

By the classical inequality, \cite{Lions1994,GJV2004}, there is an absolute constant $C>0$ such that 
\begin{equation}\label{e:flogf}
\int_\domain | f \log f | \dv\dx  \leq \int_\domain f \log f \dv\dx + \frac14 \int_\domain |v|^2 f \dv\dx + C.
\end{equation}
Thus,
\begin{equation}\label{e:flogf}
\begin{split}
\s \int_\domain | f^\e \log f^\e | \dv\dx & \leq \s \int_\domain f^\e \log f^\e \dv\dx + \frac{\s}{4} \int_\domain |v|^2 f^\e \dv\dx + C\s\\
& \leq \cH^{kin}_\e + C\s \leq C.
\end{split}
\end{equation}
Consequently, we obtain a uniform bound on the energy $\cE_\e =  \frac12 \int_\domain |v|^2 f^\e \dv\dx$,
\begin{equation}\label{e:unifen}
\cE_\e = \cH^{kin}_\e  - \s \int_\domain f^\e \log f^\e \dv\dx \leq C.
\end{equation}

Next, following the steps of \cite{S-EA}, we can rewrite the kinetic entropy equation slightly differently,
\begin{equation*}\label{}
\begin{split}
\dot{\cH}^{kin}_\e  \leq -\frac{1}{\e} \cI_\e  +  c \e \en(f^\e | u^\e)    - \int_{\O}  \rho^\e_\phi  |u^\e|^2 \rho^\e \dx 
+ \int_{\O}  (u^\e \rho^\e)_{\phi} \cdot u^\e \rho^\e \dx,
\end{split}
\end{equation*}
where 
\[
\cI_\e =  \int_{\domain}  \frac{| \s \n_v f^\e+ (1+ \e \rho_\phi^\e  / 2) (v -u^\e) f^\e|^2}{f^\e}  \dv\dx.
\]
And as to the macroscopic component, we have
 \begin{equation*}\label{}
\begin{split}
\dot{\cH}^{macro}_\e  & = \int_\O  \rho^\e (u^\e - u) \cdot \n u \cdot (u - u^\e) \dx +\int_\O   \n u: \cR_\e \dx  
- \s \int_\O  \rho^\e \n \cdot u \dx \\
&+\int_\O [ \rho^\e    ( (u\rho)_\phi - u\rho_\phi ) \cdot u - \rho^\e   ((u^\e \rho^\e)_\phi- u^\e \rho^\e_\phi) \cdot u 
-  \rho^\e  ( (u\rho)_\phi - u\rho_\phi ) \cdot u^\e] \dx\\
&  - \frac{1}{\e} \int_\O \rho^\e(u^\e_\d - u^\e)\cdot u  \dx.
\end{split}
\end{equation*}
All these terms have been estimated in \cite{S-EA}:
\begin{equation*}\label{}
\begin{split}
 &\int_\O  \rho^\e (u^\e - u) \cdot \n u \cdot (u - u^\e) \dx  \leq \en(f^\e | u), \\[0.5ex]
 &\int_\O   \n u: \cR_\e \dx  \leq  \sqrt{\en(f^\e | u^\e)  \cI_\e} + \e \en(f^\e | u^\e)  
 \lesssim  \sqrt{ \cI_\e} + \e \leq - \frac{1}{2\e} \cI_\e + 2\e \\[0.5ex]
 &\int_\O  [ \rho^\e    ( (u\rho)_\phi - u\rho_\phi ) \cdot u - \rho^\e   ((u^\e \rho^\e)_\phi- u^\e \rho^\e_\phi) \cdot u 
 -  \rho^\e  ( (u\rho)_\phi - u\rho_\phi ) \cdot u^\e] \dx  \\
 & \hfill \leq   \int_{\O}  \rho^\e_\phi  |u^\e|^2 \rho^\e \dx - \int_{\O}  (u^\e \rho^\e)_{\phi} \cdot u^\e \rho^\e \dx + \en(f^\e | u) 
 + W^2_2(\rho^\e,\rho), \\[0.5ex]
 & \frac{1}{\e} \int_\O  \rho^\e(u^\e_\d - u^\e)\cdot u  \dx \leq  \frac{\d}{\e}.
\end{split}
\end{equation*}

Adding all the obtained estimates yields
\begin{equation}\label{ }
\dot{\cH}_\e \lesssim \en(f^\e | u) + \e + \frac{\d}{\e} + W^2_2(\rho^\e,\rho).
\end{equation}
As to the modulated energy, we have
\[
\en(f^\e | u) = {\cH}_\e - \s\int_\domain f^\e \log f^\e \dv\dx.
\]
Going back to \eqref{e:flogf}, let us estimate the Boltzmann term slightly differently,
\[
\s\int_\domain | f^\e \log f^\e | \dv\dx  \leq \s \int_\domain f^\e \log f^\e \dv\dx + \frac{\s}{2} \en(f^\e | u) 
+ C\s + \s \cH^{macro}_\e \leq \cH_\e + C\s,
\]
where in the last step we used that $\cH^{macro}_\e \leq C$. Thus,
\begin{equation}\label{ }
\en(f^\e | u) \lesssim \cH_\e + C\s,
\end{equation}
and consequently,
\begin{equation}\label{ }
\dot{\cH}_\e \lesssim \cH_\e + \s+ \e + \frac{\d}{\e} + W^2_2(\rho^\e,\rho).
\end{equation}

Now, let us turn to the Wasserstein distance term.  Let us consider the flow-maps associated with the $u^\e$ and $u$:
\[
\dot{X}^\e = u^\e(X^\e), \quad \dot{X} = u(X).
\]
Let us also consider the density distributions evolving according to the joint transport equation,
\[
\g_t + \n_{x_1} (u^\e(x_1) \g) + \n_{x_2} (u(x_2) \g) =0,
\]
with initial condition $\g_0 = \rho(x_1) \d_{x_1 = x_2}$. Notice that both marginals of $\g_0$ are $\rho_0$. By uniqueness, the marginals of 
$\g$ at time $t$ are $\rho^\e$ and $\rho$.  Therefore,
\[
W^2_2(\rho^\e,\rho) \leq \int_{\O^2} |x_1 - x_2|^2 \dg = \int_{\O^2} |X^\e - X|^2 \dg_0 
= \int_{\O} |X^\e(x,t) - X(x,t)|^2 \rho_0(x) \dx : = D.
\]

Let us differentiate $D$:
\begin{equation*}\label{}
\begin{split}
\dot{D} & = 2 \int_{\O} (X^\e - X)(u^\e(X^\e) - u(X)) \rho_0(x) \dx \leq D + \int_{\O} |u^\e(X^\e) - u(X)|^2 \rho_0(x) \dx \\
& \leq D +  \int_{\O} |u^\e(X^\e) - u(X^\e)|^2 \rho_0(x) \dx +  \int_{\O} |u(X^\e) - u(X)|^2 \rho_0(x) \dx\\
& \leq D+  \int_{\O} |u^\e - u|^2 \rho^\e \dx + \|\n u\|_\infty D \lesssim D+ \en(f^\e | u) \leq D + \cH_\e + C\s,
\end{split}
\end{equation*}
and arrive at the following system
\begin{equation*}\label{}
\begin{split}
\dot{\cH}_\e & \lesssim D+ \cH_\e + \s+ \e + \frac{\d}{\e} \\
\dot{D} & \lesssim D + \cH_\e + \s.
\end{split}
\end{equation*}
Remembering that $D_0 = 0$, and $\cH_\e(0) \lesssim \s \log\frac{1}{\s}$, we obtain
\[
D + \cH_\e \lesssim  \s \log\frac{1}{\s} + \e + \frac{\d}{\e}.
\]
This partially proves the claimed estimates \eqref{e:distW}. 

Let us now compare $f^\e$ directly with the monokinetic solution $f = {\rho}(x,t) \d_0(v-{u}(x,t))$. We note that initially 
\[
W_2^2(f^\e_0,f_0) = W_2^2(\mu_{\rho_0,u_0,\s}, \rho_0 \d_{u_0}) \lesssim \s.
\]
This can be seen by considering the joint distribution
\begin{equation}\label{e:g0}
\g_0 =  \d_0(x_1 - x_2)  \frac{ \rho_0(x_1) }{(2\pi \s)^{n/2}} e^{- \frac{|v_1 - u_0(x_1)|^2}{2\s}}  \d_0(v_2 - u_0(x_2))
\end{equation}
and computing the quadratic cost-function 
\[
W(0) : = \int_{\O^2 \times \R^{2n}} |\w_1 - \w_2|^2 \dg_0(\w_1,\w_2)  \sim \s.
\]

To trace the evolution of the distance at later times, we consider the following coupled Fokker-Planck equation
\begin{equation}\label{e:tg}
\begin{aligned}
\p_t \g &+ v_1 \cdot \n_{x_1} \g + v_2 \cdot \n_{x_2} \g + \n_{v_1}[ ( (u^\e \rho^\e)_\phi(x_1) - \rho^\e_\phi(x_1) v_1) \g ] \\
&+  \n_{v_2}[ (  (u \rho)_\phi(x_2) - \rho_\phi(x_2) v_2 ) \g ]
=   \frac{1}{\e} \n_{v_1}[ \s \n_{v_1} \g + (v_1 - u^\e_\d(x_1)) \g ].
\end{aligned}
\end{equation}

Let $\g$ be the solution evolving according to \eqref{e:tg} with initial condition \eqref{e:g0}. Then the marginals of $\g$ satisfy the equations of $f^\e$ and $f$ respectively. By uniqueness, those marginals are $f^\e$ and $f$ respectively.
Hence,
\[
W : = \int_{\O^2 \times \R^{2n}} |\w_1 - \w_2|^2 \dg(\w_1,\w_2)  \geq W_2^2(f^\e,f).
\]
Let $X^\e,X,V^\e,V$ satisfy the corresponding stochastic characteristic system
\begin{equation*}\label{}
\begin{split}
&\dot{X}^\e = V^\e, \quad \dot{X}  = V, \\
&\dot{V}^\e  = (u^\e \rho^\e)_\phi(X^\e) - \rho^\e_\phi(X^\e) V^\e +  \frac{1}{\e} (u^\e_\d(X^\e) - V^\e) + \sqrt{2 \s / \e} \ \dot{B}, \\
&\dot{V}  = (u \rho)_\phi(X) - \rho_\phi(X) V,\\
&X^\e(x_1,x_2,v_1,v_2,0) = x_1,  \quad  X(x_1,x_2,v_1,v_2,0) = x_2\\
&V^\e(x_1,x_2,v_1,v_2,0) = v_1, \quad V(x_1,x_2,v_1,v_2,0) = v_2,
\end{split}
\end{equation*}
where $B$ is a Brownian process. Then by the classical It\^o calculus, $\g$ is a push-forward of $\g_0$ along the stochastic characteristics, and hence,
\[
W = \E \int_{\O^2 \times \R^{2n}} (|X^\e - X|^2 + |V^\e - V|^2) \dg_0.
\]
Let us denote
\[
W_x = \E \int_{\O^2 \times \R^{2n}} |X^\e - X|^2 \dg_0, \quad W_v = \E \int_{\O^2 \times \R^{2n}} |V^\e - V|^2 \dg_0.
\]
Let us differentiate,
\[
\dot{W}_x  = 2 \E \int_{\O^2 \times \R^{2n}} (X^\e - X) \cdot  (V^\e- V)  \dg_0 \leq W_x+ W_v.
\]
As to the $v$-distance,
\begin{equation*}\label{}
\begin{split}
W_v & =   \int_{\O^2 \times \R^{2n}} |v_1 - v_2|^2 \dg \\
&  \leq  \int_{\O^2 \times \R^{2n}} |v_1 - u(x_1) |^2 \dg + \int_{\O^2 \times \R^{2n}} |u(x_1) - u(x_2)|^2 \dg 
+  \int_{\O^2 \times \R^{2n}} |u(x_2) - v_2|^2 \dg  \\
& \leq \int_{\O \times \R^{n}} |v - u(x) |^2 f^\e(x,v)\dv\dx + \|\n u\|_\infty  \int_{\O^2 \times \R^{2n}} |x_1 - x_2|^2 \dg + 0
\end{split}
\end{equation*}
where the last term vanished thanks to the monokineticity of $f$. So,
\begin{equation}\label{e:Wv}
W_v \leq  \en(f^\e|u) + C W_x \lesssim  \s \log\frac{1}{\s} + \e +\frac{\d}{\e}+ W_x.
\end{equation}
Coming back to the $W_x$-equation,
\[
\dot{W}_x \leq W_x + \s \log\frac{1}{\s} + \e+\frac{\d}{\e},
\]
and hence, by the \GL,
\[
W_x(t) \lesssim \s + \s \log\frac{1}{\s} + \e+\frac{\d}{\e} \lesssim  \s \log\frac{1}{\s} + \e + \frac{\d}{\e}.
\]
From \eqref{e:Wv} we deduce a similar bound for $W_v$. Thus, 
\[
W_2^2(f^\e,f) \leq W \lesssim  \s \log\frac{1}{\s} + \e + \frac{\d}{\e}.
\]

We also conclude the convergence of macroscopic quantities. Indeed, for the densities, this has already been proved above 
$W_2(\rho^\e, \rho) \to 0$. To see weak convergence 
\[
u^\e \rho^\e \to u \rho,
\]
as measures, the quickest way is to consider the $W_{1}$-metric. Indeed, given the results of the previous section,
\[
W_1(u^\e \rho^\e , u \rho) \leq W_1(u^\e \rho^\e , u \rho^\e)+W_1(u \rho^\e , u \rho) \leq \|u^\e \rho^\e - u \rho^\e\|_1 + W_2(\rho^\e,\rho) \leq \sqrt{\en(f^\e|u)} + W_2(\rho^\e,\rho).
\]
Thus,
\begin{equation}\label{e:uueBL}
W^2_1(u^\e \rho^\e , u \rho) \lesssim  \s \log\frac{1}{\s} + \e+ \frac{\d}{\e}.
\end{equation}
\end{proof}

\subsection{Modulation analysis}
Insofar, the convergence result stated in \thm{t:FPlimit} makes no mention about how $f^\e$ scales with $\s$. It is natural to expect, 
however, that the $\s$-rescaled  solution resembles a Gaussian profile.  This is indeed true, as will be demonstrated here under 
non-oscillation conditions on macroscopic velocities $u^\e$.  

To this end, let us consider the rescaled  solution using a new unknown modulation parameter $m^\e$, 
\begin{equation}\label{e:module2}
f^\e(t,x,v) = \frac{1}{\s^{n/2}} g^\e\left(t,x, \frac{v - m^\e(t,x)}{\s^{1/2}}\right).
\end{equation}
The modulated velocity $m^\e$ is not assumed to be the native velocity $u^\e$ as, for example, in \cite{CTY2018}. Instead, we treat it as an 
additional unknown variable along with $g^\e$.

Reading off the evolution equations for all the quantities involved, we arrive at the following system:
\begin{equation*}\label{}
\boxed{
\begin{split}
&\hspace{1.0in}\text{\bf Modulation of the Fokker-Planck-Alignment system }\\
&\p_t  \rho^\e + \n \cdot (u^\e \rho^\e)  = 0, \quad \rho_0^\e = \rho_0\\
&\p_t  (\rho^\e u^\e) + \n \cdot (\rho^\e u^\e \otimes u^\e) - \n \cdot (\rho^\e (u^\e - m^\e) \otimes (u^\e - m^\e)) 
+  \s \n_x \cdot \int_{\R^n} \xi \otimes  \xi  g^\e \dxi =  \rho^\e ((\rho^\e u^\e)_\phi - u^\e \rho^\e_\phi ),\\
&\p_t m^\e  + m^\e \cdot \n m^\e = (\rho^\e u^\e)_\phi - m^\e \rho^\e_\phi+ \frac{1}{\e}( u_\d^\e - m^\e),\quad  m^\e_0 = u_0^\e = u_0\\
&\p_t g^\e  + \n_x \cdot ( (\s \xi + m^\e) g^\e )  =  \n_\xi \cdot ( (\xi \cdot \n m^\e +  \xi \rho^\e_\phi ) g^\e) 
+ \frac{1}{\e}( \D_\xi g^\e + \n_\xi(\xi g^\e)), \quad  g^\e_0 = g_0.
\end{split}}
\end{equation*}

In this regime, we expect to show that as $\e \to 0$, the  profile converges to the local Maxwellian with the standard Gaussian in $v$,
\[
g^\e \to \mu_{\rho,0,1}  =  \frac{\rho(t,x) }{(2\pi)^{n/2}} e^{-|\xi|^2 / 2}.
\]

\begin{theorem}\label{t:hydroMax} 
Let $(u,\rho)$ be a smooth non-vacuous solution to \eqref{e:EAS}  on a time interval $[0,T]$. Let us consider the optimal resolution 
$\d = \e^2$. Suppose that
\begin{equation}\label{e:non-osc}
\sup_{\e>0} \| u^\e\|_\infty + \| \n u^\e\|_\infty + \| \n^2 u^\e\|_\infty <\infty.
\end{equation}
Then, for any $t \in [0,T]$, we have
\begin{equation}\label{ }
\cH(g^\e | \mu_{\rho,0,1}) \lesssim  \s \log\frac{1}{\s} + \e.
\end{equation}
\end{theorem}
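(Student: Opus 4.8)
The plan is to run a Gr\"onwall estimate on $\cH(g^\e|\mu_{\rho,0,1})$ read directly off the boxed $g^\e$-system, exploiting that the only stiff term there, $\frac{1}{\e}(\D_\xi g^\e+\n_\xi(\xi g^\e))$, is exactly the Fokker--Planck operator whose equilibrium is the standard Gaussian $G(\xi)=(2\pi)^{-n/2}e^{-|\xi|^2/2}$; it therefore produces a dissipation of order $\frac{1}{\e}$ in the relative Fisher information, while the modulation has been designed so that every other contribution is a benign transport--drift perturbation.

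First I would differentiate $\cH(g^\e|\mu_{\rho,0,1})$ using the $g^\e$-equation and the limiting continuity equation $\p_t\rho+\n\cdot(u\rho)=0$. After integration by parts in $x$ and $\xi$ (the $x$-transport of the $\xi$-independent density $G$ contributes nothing, and the $\n\cdot m^\e$ terms from the $x$- and $\xi$-fluxes cancel), one obtains
\[
\frac{d}{dt}\cH(g^\e|\mu_{\rho,0,1})=-\frac{1}{\e}\,\cI_\xi(g^\e)+\cE^\e,\qquad \cI_\xi(g^\e):=\int_\domain g^\e\,\big|\n_\xi\log(g^\e/G)\big|^2\dxi\dx,
\]
where $\cE^\e$ is the sum of: (i) a momentum term $\s\int_\O\n\log\rho\cdot\big(\int\xi g^\e\dxi\big)\dx$; (ii) two ``second-moment mismatch'' terms $\int_\domain\rho^\e_\phi(n-|\xi|^2)g^\e\dxi\dx$ and $-\int_\O\n m^\e:(\Sigma^\e-\rho^\e I)\dx$ with $\Sigma^\e=\int\xi\otimes\xi\,g^\e\dxi$; and (iii) a velocity-discrepancy term $-\int_\O\rho^\e(u-m^\e)\cdot\n\log(\rho^\e/\rho)\dx$, the commutator of the $x$-transport with the $x$-dependence of $\log\rho$.

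The three ingredients that control $\cE^\e$ are: (a) a uniform second-moment bound $\sup_t\int_\domain|\xi|^2g^\e\dxi\dx\le C$, obtained by a single Gr\"onwall line on $K(t):=\int_\domain|\xi|^2g^\e\dxi\dx$, since the Fokker--Planck part contributes exactly $\frac{1}{\e}(2nM_0-2K)$ (relaxation toward the Gaussian value $nM_0$, $M_0$ the conserved mass) while the transport/drift parts are $O(K)$ using that the no-oscillation hypothesis on $u^\e$ gives, through \eqref{e:uder}, \eqref{e:mollest} and the $m^\e$-equation, $\|m^\e\|_\infty+\|\n m^\e\|_\infty\le C$; thus $K(0)=nM_0$ forces $K\le 2nM_0$; (b) the discrepancy bound $\|u^\e-m^\e\|_{L^2(\rho^\e)}\lesssim\e$, from the stiff relaxation $\frac{1}{\e}(u^\e_\d-m^\e)$ in the $m^\e$-equation together with \eqref{e:mollest} (and $\d=\e^2$) and the energy bounds already established in \thm{t:FPlimit}; (c) the Gaussian log-Sobolev inequality in $\xi$, applied fiberwise in $x$ and integrated against $\rho^\e$, giving $\cI_\xi(g^\e)\ge 2\,\cH(g^\e|\mu_{\rho^\e,0,1})$. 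Using (a), I rewrite the mismatch in (ii): with $r^\e:=g^\e/(\rho^\e G)$, an integration by parts in $\xi$ gives $\Sigma^\e_{ij}-\rho^\e\delta_{ij}=\rho^\e\int\xi_j\,\p_{\xi_i}r^\e\,G\dxi$, and then Cauchy--Schwarz bounds the terms in (ii) by $C\sqrt{K}\,\sqrt{\cI_\xi(g^\e)}\le C\sqrt{\cI_\xi(g^\e)}$, absorbed into $-\frac{1}{\e}\cI_\xi$ up to an $O(\e)$ remainder; term (i) is $O(\s^{1/2}\e)$ by (b); and term (iii) equals $\frac{d}{dt}\int_\O\rho^\e\log(\rho^\e/\rho)\dx+O(\e)$, because on splitting $u-m^\e=(u-u^\e)+(u^\e-m^\e)$ the first piece is exactly the time derivative of the relative entropy of the two continuity equations and the second is $O(\e)$ by (b). Writing $R(t):=\int_\O\rho^\e\log(\rho^\e/\rho)\dx$ this yields
\[
\frac{d}{dt}\big(\cH(g^\e|\mu_{\rho,0,1})-R\big)\le-\frac{1}{2\e}\cI_\xi(g^\e)+C\e\le-\frac{1}{\e}\,\cH(g^\e|\mu_{\rho^\e,0,1})+C\e,
\]
and since $g^\e_0=\rho_0 G=\mu_{\rho_0,0,1}$ makes the left side vanish at $t=0$, Gr\"onwall gives $\cH(g^\e|\mu_{\rho^\e,0,1})\lesssim\e$.

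It remains to estimate the purely macroscopic quantity $R$, and this is the step I expect to be the main obstacle. One has $\dot R=\int_\O\rho^\e(u^\e-u)\cdot\n\log(\rho^\e/\rho)\dx$; under the no-oscillation hypothesis $\rho^\e$, transported by $u^\e$, keeps uniform $C^1$ and strictly positive bounds, so $\n\log(\rho^\e/\rho)$ is uniformly bounded, and combining this with $\en(f^\e|u),\,W_2^2(\rho^\e,\rho)\lesssim\s\log\frac{1}{\s}+\e$ from \thm{t:FPlimit} one propagates $R\lesssim\s\log\frac{1}{\s}+\e$ from $R(0)=0$. Adding this to $\cH(g^\e|\mu_{\rho^\e,0,1})\lesssim\e$ through $\cH(g^\e|\mu_{\rho,0,1})=\cH(g^\e|\mu_{\rho^\e,0,1})+R$ gives the claim. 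The kinetic half of the argument is robust, but extracting the sharp rate $\s\log\frac{1}{\s}+\e$ for the density relative entropy $R$ — rather than its square root, which is what a crude Cauchy--Schwarz bound on $\dot R$ delivers — from only the transported Lipschitz control and the available $W_2$/energy estimates is the delicate point.
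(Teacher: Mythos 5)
Your strategy coincides with the paper's: both proofs run a Gr\"onwall argument on the relative entropy, extract the $\frac{1}{\e}$-dissipation of the relative Fisher information $\cI(g^\e)=\int_\domain |\n_\xi g^\e+\xi g^\e|^2/g^\e \dxi\dx$ from the stiff term, convert it to entropy via the log-Sobolev inequality in $\xi$, and control the drift errors through uniform $W^{1,\infty}$ bounds on $m^\e$ obtained from the $m^\e$-equation under \eqref{e:non-osc}. Two organizational differences are worth noting. First, the paper does not absorb the second-moment mismatch terms into the Fisher information: it first proves the uniform energy bound $\cH_2=\frac12\int_\domain|\xi|^2 g^\e\dxi\dx\le C$ (a one-line Gr\"onwall, since the stiff term relaxes $\cH_2$ at rate $\frac{1}{\e}$), and then simply bounds all transport/drift contributions by a constant; this suffices because $\dot y\le C-\frac{c}{\e}y$ with $y(0)=0$ already yields $y\lesssim\e$. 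Your Cauchy--Schwarz absorption via the identity for $\Sigma^\e-\rho^\e I$ is correct but more elaborate than needed. (A minor point: the paper controls $\int_\O|m^\e-u^\e|^2\rho^\e\dx$ by $\s\cH_2\le C\s$ through the second moment of $g^\e$, rather than by $\e$ through the stiff relaxation; either way the term it enters is harmless.)

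Second --- and this addresses the obstacle you flag at the end --- the paper does not differentiate $R=\int_\O\rho^\e\log(\rho^\e/\rho)\dx$ at all. It keeps $\cH_3=-\int_\O\rho^\e\log\rho\dx$ inside the entropy (whose derivative is bounded by a constant using only the regularity of the limiting solution), arrives at $\ddt\cH(g^\e|\mu_{\rho,0,1})\le C-\frac{c}{\e}\cH(g^\e|\mu_{\rho^\e,0,1})$, and then converts $\cH(g^\e|\mu_{\rho^\e,0,1})$ into $\cH(g^\e|\mu_{\rho,0,1})$ at the price of $R$, which is estimated \emph{statically} at each time: writing $\log\rho^\e-\log\rho=\int_0^1(\rho^\e-\rho)/(\th\rho^\e+(1-\th)\rho)\dth$ and using that $\rho^\e$ is uniformly Lipschitz and non-vacuous (consequences of \eqref{e:non-osc}), $R$ is compared to $W_2^2(\rho^\e,\rho)$, which \thm{t:FPlimit} already bounds by $\s\log\frac{1}{\s}+\e$. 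The resulting forcing term $\frac{1}{\e}(\s\log\frac{1}{\s}+\e)$ in the Gr\"onwall inequality then produces exactly the claimed rate. So the missing ingredient in your write-up is not a sharper bound on $\dot R$ but the observation that $R$, being quadratic in $\rho^\e-\rho$ with uniformly controlled weights, can be fed into the Gr\"onwall as a known forcing term via the Wasserstein estimate of the previous theorem, bypassing the square-root loss entirely. With that substitution your argument closes and matches the paper's.
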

\begin{proof}
Let us make some preliminary remarks that follow directly from the non-oscillation assumption \eqref{e:non-osc}. Since 
$\rho^\e_0 = \rho_0 > 0$ is non-vacuous, solving the continuity equation along characteristics and using that $\| \n u^\e\|_\infty <C$, we 
obtain 
\begin{equation}\label{e:rhononvac}
\rho^\e(x,t) \geq c_0 >0, \text{ for all } (x,t) \in \T^n \times [0,T].
\end{equation}
Furthermore, since $\| \n^2 u^\e\| <C$, we also have 
\begin{equation}\label{e:rhoLip}
\| \n \rho^\e\|_\infty <C.
\end{equation}
Both of these conditions immediately guarantee that the renormalization remains uniformly Lipschitz independently of $\e$ and $\d$:
\begin{equation}\label{ }
\| u_\d^\e\|_\infty + \| \n u_\d^\e\|_\infty <C.
\end{equation}

Let us now establish control over the modulated velocity $m^\e$.  First, let us read off an equation for the $L^\infty$-norm
\begin{equation}\label{ }
\p_t \|  m^\e\|_\infty \leq   \| (\rho^\e u^\e)_{\phi} \|_\infty + \frac{1}{\e}( \|u_\d^\e\|_\infty - \|  m^\e\|_\infty ) \leq c_1 
+ c_2 \frac{1}{\e } - \frac{1}{\e} \|  m^\e\|_\infty.
\end{equation}
By \GL, we have
\begin{equation}\label{e:minfty}
\|  m^\e\|_\infty \leq C.
\end{equation}

Next, we have
\[
\p_t \| \n m^\e\|_\infty \leq c \| \n m^\e\|_\infty^2 + \| \n(\rho^\e u^\e)_{\phi}\|_\infty + \|  \n \rho_\phi^\e\|_\infty 
\|  m^\e\|_\infty + c \|  \n m^\e\|_\infty  + \frac{1}{\e}(\| \n u_\d^\e\|_\infty - \| \n m^\e\|_\infty).
\]
Denoting $x =  \| \n m^\e\|_\infty$ we obtain the following inequality
\[
\dot{x} \leq c_1+ c_2 x^2 + \frac{c_3}{\e} - \frac{c_4}{\e} x.
\]
This implies that $x$ is uniformly bounded by a constant depending only on $c_1,...,c_4$. Indeed, let $C$ be large so that initially 
$x_0 \leq C$ and for a short time interval $x < 2C$.  Let $t^*\in [0,T]$ be the first time when $x(t^*) = 2C$. Then for all $t< t^*$,  we 
have
\[
\dot{x} \leq c_1 + 2c_2 C x + \frac{c_3}{\e} - \frac{c_4}{\e} x = c_1 + \frac{c_3}{\e} +(2c_2 C - \frac{c_4}{\e}) x.
\]
Integrating, and evaluating at $t= t^*$, we obtain for $\e$ small enough
\[
2C \leq C + \frac{c_1 + \frac{c_3}{\e}}{  \frac{c_4}{\e} - 2c_2 C}  \leq C + 2 \frac{c_3}{c_4}
\]
We arrive at a contradiction if $C > 2 \frac{c_3}{c_4}$. This shows that 
\begin{equation}\label{e:dm}
\| \n m^\e\|_\infty \leq 2C.
\end{equation}

Let us introduce the relative entropy into consideration:
\[
\cH(g^\e | \mu_{\rho,0,1}) =   \int_\domain g^\e \log g^\e \dv \dx + \frac12  \int_\domain |\xi|^2  g^\e - \int_\O \rho^\e \log \rho 
+ \const := \cH_1 + \cH_2 + \cH_3 + \const.
\]
Differentiating we obtain
\begin{equation*}\label{}
\begin{split}
\ddt \cH_1 & = - \int_\domain ( \xi \cdot \n m^\e \cdot \n_\xi g^\e + \st_{\rho^\e} \xi \cdot \n_\xi g^\e) \dxi \dx 
- \frac{1}{\e} \int_\domain \left( \frac{|\n_\xi g^\e|^2}{g^\e} + \xi \cdot \n_\xi g^\e \right) \dxi \dx,\\[0.5ex]
\ddt \cH_2 & =  - \int_\domain ( \xi \cdot \n m^\e \cdot \xi  + \st_{\rho^\e} |\xi|^2) g^\e \dxi\dx  
- \frac{1}{\e} \int_\domain ( \xi \cdot \n_\xi g^\e + |\xi|^2 g^\e ) \dxi \dx
\end{split}
\end{equation*}
First, let us take a look at $\cH_2$, the energy. Using \eqref{e:dm} we obtain
\[
\ddt \cH_2 \leq \frac{n}{\e}  - \frac{1}{2\e} \cH_2.
\]
This implies that $\cH_2$ remains uniformly bounded,
\begin{equation}\label{e:h2unif}
\cH_2 \leq C, \quad \forall \e>0.
\end{equation}

Now, let us go back to our computation and add up $\cH_1$ and $\cH_2$,
\[
\ddt (\cH_1 +\cH_2) =- \int_\domain ( \xi \cdot \n m^\e \cdot \n_\xi g^\e + \st_{\rho^\e} \xi \cdot \n_\xi g^\e) \dxi \dx 
- \int_\domain ( \xi \cdot \n m^\e \cdot \xi  + \st_{\rho^\e} |\xi|^2) g^\e \dxi\dx - \frac{1}{\e} \cI(g^\e),
\]
where $\cI(g^\e)$ is the Fisher information 
\[
\cI(g^\e) = \int_\domain \frac{|\n_\xi g^\e + \xi g^\e|^2}{g^\e} \dxi \dx.
\]
Given \eqref{e:dm}, the other two terms are bounded by a constant multiple of $\cH_2$, which, in view of \eqref{e:h2unif}, is uniformly 
bounded. So, we obtain
\begin{equation}\label{ }
\ddt (\cH_1 +\cH_2) \leq  C - \frac{1}{\e} \cI(g^\e). 
\end{equation}

Now, by the log-Sobolev-in-$\xi$ inequality, $\cI(g^\e) \gtrsim \cH(g^\e | \mu_{\rho^\e,0,1})$, so we arrive at
\begin{equation}\label{ }
\ddt (\cH_1 +\cH_2) \leq  C- \frac{c_2}{\e} \cH(g^\e | \mu_{\rho^\e,0,1}). 
\end{equation}

It remains to take care of $\cH_3$,
\[
\ddt \cH_3 = \int_\O \p_t \rho^\e \log \rho \dx + \int_\O \rho^\e \p_t \rho \frac{1}{\rho } \dx =  \int_\O  \rho^\e u^\e  \cdot \n \rho 
\frac{1}{\rho} \dx - \int_\O \rho^\e \n \cdot (u \rho) \frac{1}{\rho } \dx.
\]
In view of the no-vacuum assumption and the assumption on the regularity of the limiting solution, we have
\[
\ddt \cH_3 \lesssim  \int_\O  \rho^\e |u^\e|\dx + \int_\O  \rho^\e \dx \leq \sqrt{\cE_\e} + 1 \leq c.
\]

To conclude,
\begin{equation}\label{ }
\ddt \cH(g^\e | \mu_{\rho,0,1})  \leq C- \frac{c_3}{\e} \cH(g^\e | \mu_{\rho^\e,0,1}).
\end{equation}

To complete the estimate, we notice that 
\[
\cH(g^\e | \mu_{\rho,0,1}) = \cH(g^\e | \mu_{\rho^\e,0,1}) + \int_\O \rho^\e \log \frac{\rho^\e}{\rho} \dx.
\]
Since $\rho^\e$ is uniformly Lipschitz, \eqref{e:rhoLip}, and uniformly non-vacuous \eqref{e:rhononvac}, we estimate using the result of the 
previous section,
\begin{equation*}\label{}
\begin{split}
\int_\O \rho^\e \log \frac{\rho^\e}{\rho} \dx & =  \int_\O \rho^\e (\log \rho^\e - \log \rho ) \dx 
 = \int_0^1  \int_\O \frac{\rho^\e}{\th \rho^\e + (1-\th) \rho} (\rho^\e - \rho ) \dx \dth \\
 & \leq C W_1^2(\rho^\e,\rho) \leq C W_2^2(\rho^\e,\rho) \lesssim  \s \log\frac{1}{\s} + \e.
\end{split}
\end{equation*}
So,
\begin{equation}\label{ }
\ddt \cH(g^\e | \mu_{\rho,0,1})  \leq c_1 + c_2 \frac{\s \log\frac{1}{\s} }{\e} - \frac{c_3}{\e} \cH(g^\e | \mu_{\rho,0,1}).
\end{equation}
By \GL,
\[
\cH(g^\e | \mu_{\rho,0,1}) \lesssim  \s \log\frac{1}{\s} + \e,
\]
as desired. 

As to the convergence of momenta, let us observe
\begin{equation*}\label{}
\begin{split}
\| m^\e \rho^\e - u^\e \rho^\e\|_1^2 \leq  \int_\O |m^\e - u^\e|^2  \rho^\e\dx \leq \int_\domain |v - m^\e|^2 f^\e \dx \dv 
= \s \cH_2 \leq \s.
\end{split}
\end{equation*}
So, $m^\e \rho^\e \to u^\e \rho^\e$ strongly in $L^1$. Combining with \eqref{e:uueBL}, we can estimate the $W_1$-distance of modulated 
momentum to the limiting one
\begin{equation}\label{ }
W^2_{1}(m^\e \rho^\e , u \rho)  \lesssim  \s \log\frac{1}{\s} + \e.
\end{equation}
\end{proof}

\section{Vlasov scheme. Maximum principle}
Introducing the penalization force in the form of local alignment $F = \n_v( (v- u)f)$ we will examine the hydrodynamic limit as $\e \to 0$ of solutions to 
\begin{equation}\label{e:VAe}
\p_t f^\e + v \cdot \n_x f^\e = \n_v( (\rho^\e_\phi v - (u^\e \rho^\e)_\phi ) f^\e) + \frac{1}{\e}  \n_v( (v - u^\e) f^\e),
\end{equation}
where $\rho^\e$ and $\rho^\e u^\e$ are the macroscopic density and momentum of $f^\e$. It is expected that the penalization force drives the solution to monokinetic ansatz $f^\e \to f = {\rho}(x,t) \d_0(v-{u}(x,t))$, where ${\rho}, {u}$ solve the pressureless Euler Alignment system \eqref{e:EAS}.

The following theorem was proved in \cite{FK2019} with an improvement to vacuous solutions and convergence in $W_2$ in \cite{S-EA}.

\begin{theorem}\label{t:hydrolim}Let $(\rho,u)$ be a classical solution to \eqref{e:EAS} on the time interval $[0,T]$ and let $f = \rho(x,t) \d_0(v-u(x,t))$. Suppose $f_0^\e \in \Lip(\domain)$ is a family of initial conditions for \eqref{e:VA} satisfying 
\begin{itemize}
	\item[(i)] $\supp f_0^\e \ss B_R(0)$;
	\item[(ii)] $W_2(f_0^\e, f_0) \leq \e$.
\end{itemize}
Then there exists a constant $C$ such that for all $t\leq T$ one has
\begin{equation}\label{e:W1ed}
W_2(f^\e_t, f_t)   \leq C  \sqrt{\e}.
\end{equation}
\end{theorem}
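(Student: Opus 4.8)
The plan is to follow the two–step scheme used in the proof of \thm{t:FPlimit}, now with $\s=0$ and with $u^\e$ in place of the mollified field, combining a modulated–energy bound on the macroscopic quantities with a phase–space coupling for the full distributions. (In the presence of vacuum one reads $u^\e$ as the Favre–averaged velocity $\uF$, exactly as in \cite{S-EA}; this does not affect the structure below.) As a preliminary step I would use assumption (i) together with the velocity–support maximum principle for \eqref{e:VAe}: at an extremal velocity the stiff force $\tfrac1\e(u^\e-v)$ points inward, since $u^\e$ is an average of velocities in the support, so $\sup\{|v|:(x,v)\in\supp f^\e_t\}$ satisfies an $\e$–free \GL\ inequality and hence stays bounded on $[0,T]$ uniformly in $\e$. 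Consequently the mass, momentum, energy, and in particular the internal energy $T^\e:=\tfrac12\int_\domain|v-u^\e|^2 f^\e\,\dv\dx$ are bounded on $[0,T]$ independently of $\e$.

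Step one: I would run the modulated energy $\en(f^\e|u)=\tfrac12\int_\domain|v-u|^2 f^\e\,\dv\dx$ against the smooth limiting velocity $u$. Differentiating and using \eqref{e:EAS} for $u$, the stiff term contributes exactly $-\tfrac1\e\int_\domain|v-u^\e|^2 f^\e\,\dv\dx=-\tfrac2\e T^\e$; crucially this uses $\int(v-u^\e)f^\e\,\dv=0$, so no pointwise value of $u^\e$ is needed and the sign is automatic. The transport and Cucker–Smale communication terms are handled verbatim as in \cite{S-EA} and are bounded by $\lesssim\en(f^\e|u)+W_2^2(\rho^\e,\rho)$, and the Reynolds–stress term is absorbed into $-\tfrac1\e\int|v-u^\e|^2 f^\e$ by a Young inequality calibrated at scale $\e$, leaving a remainder $\lesssim\e T^\e\lesssim\e$. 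Thus $\tfrac{d}{dt}\en(f^\e|u)\lesssim\en(f^\e|u)+W_2^2(\rho^\e,\rho)+\e$. In parallel, transporting a joint density along the flow maps of $u^\e$ and of $u$ and using that the marginals are $\rho^\e$ and $\rho$ gives $\tfrac{d}{dt}W_2^2(\rho^\e,\rho)\lesssim W_2^2(\rho^\e,\rho)+\en(f^\e|u)$, exactly as in \thm{t:FPlimit}. Since (ii) forces $\en(f_0^\e|u_0)\lesssim\e^2$ and $W_2(\rho_0^\e,\rho_0)\le\e$, \GL\ on this $2\times 2$ system yields $\en(f^\e|u)+W_2^2(\rho^\e,\rho)\lesssim\e$ on $[0,T]$.

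Step two: I would couple $f^\e$ with $f=\rho\,\d_0(v-u)$ on phase space. Take an optimal $W_2$ plan $\g_0$ between $f_0^\e$ and $f_0$ and push it forward along the pair of characteristic flows — the kinetic flow of \eqref{e:VAe} (deterministic, since $\s=0$) and the macroscopic flow carrying $f$ — obtaining a plan $\g_t$ with marginals $f^\e_t$ and $f_t$. Writing the transported cost as $W(t)=W_x(t)+W_v(t)$ in squared position and velocity deviations, monokineticity of $f$ annihilates the $|u(x_2)-v_2|^2$ contribution, so $W_v\lesssim\en(f^\e|u)+W_x$, while $\dot W_x\le W_x+W_v\lesssim W_x+\en(f^\e|u)$ with $W_x(0)\le W_2^2(f_0^\e,f_0)\le\e^2$. \GL\ gives $W\lesssim\en(f^\e|u)+\e^2\lesssim\e$, whence $W_2^2(f^\e,f)\le W\lesssim\e$, i.e. $W_2(f^\e,f)\lesssim\sqrt\e$, which is the claim. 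The loss from the $O(\e^2)$ initial gap to the $O(\sqrt\e)$ final one is precisely the $O(\e)$ source in the modulated–energy inequality, coming from $\e\cdot T^\e$ with $T^\e$ controlled only to be $O(1)$.

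The main obstacle is organizing the computation so that the stiff $\tfrac1\e$ term appears as a sign–definite dissipation rather than an unbounded source: a naive characteristic coupling forces one to compare $V^\e$ with $u^\e(X^\e)$ pointwise, which is not controlled. The device is to modulate against the limiting field $u$ — not $u^\e$ — so that the stiff term meets $\int(v-u^\e)f^\e\,\dv=0$ and collapses to $-\tfrac2\e\int|v-u^\e|^2 f^\e$; the remaining work is routine \GL\ bookkeeping, with the uniform velocity–support bound (assumption (i)) supplying all the $\e$–independent constants. Unlike \thm{t:hydroMax}, no non–oscillation hypothesis on $\n u^\e$ or $\n^2 u^\e$ is needed here, since every Lipschitz constant used is that of the smooth limiting solution.
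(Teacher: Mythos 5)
The paper does not actually prove this theorem; it is quoted from \cite{FK2019}, with the extension to vacuous states and the $W_2$ rate attributed to \cite{S-EA}. Your proposal reconstructs precisely the method of those references, and it coincides with the $\s=0$, $\d=0$ specialization of the paper's own proof of \thm{t:FPlimit}: a Gr\"onwall system for the modulated energy $\en(f^\e|u)$ and $W_2^2(\rho^\e,\rho)$, in which the stiff term is rendered sign-definite by centering at the limiting field $u$ and invoking $\int_{\R^n}(v-u^\e)f^\e\,\dv=0$, followed by a phase-space coupling of $f^\e$ with the monokinetic $f$ along paired characteristics, where monokineticity annihilates the $|u(x_2)-v_2|^2$ contribution. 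So the approach is correct and I have no structural objection. The one place the sketch underplays a genuine difficulty is the regularity of $u^\e=(u^\e\rho^\e)/\rho^\e$: both the flow-map/uniqueness argument for the $\rho^\e$--$\rho$ coupling and the pointwise evaluation $u^\e(X^\e)$ along characteristics require $u^\e$ to be Lipschitz, which can fail near vacuum. Replacing $u^\e$ by the density-weighted filtration $u^\e_\d$ (as in \cite{S-EA} and in the paper's own Fokker--Planck scheme) repairs this, but it destroys the exact cancellation $\int_{\R^n}(v-u^\e_\d)f^\e\,\dv=0$ and produces the additional $\d/\e$ source visible in the proof of \thm{t:FPlimit}; with $\d=\e^2$ this is harmless and the $\sqrt\e$ rate survives, but it deserves to be stated explicitly rather than asserted to "not affect the structure."
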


The drawback of this convergence result is the lack of specificity as to how the distribution $f^\e$ approaches its limiting atomic state.  

Let us consider the following initial data
\begin{equation}\label{}
f^\e_0(x,v) = \frac{1}{\e^n} g_0 \left(x, \frac{ v- u_0}{\e} \right),
\end{equation}
where $g_0 \in C^\infty (\domain)$ is a non-negative, compactly supported  function  with $\int g_0 \dv = \rho_0$. This automatically 
fulfills the assumptions (i) and (ii) above.

Following the ideas of \cite{CTY2018,ReyTan2016} we introduce a disingularization of $f^\e$ in the form given by 
\begin{equation}\label{e:module}
f^\e(t,x,v) = \frac{1}{(\o^\e(t))^n}g^\e\left(t,x, \frac{v - m^\e(t,x)}{\o^\e(t)}\right),
\end{equation}
where $m^\e$ and $\o^\e$ are viewed as unknown modulation parameters -- velocity and scaling factor, respectively.  The main idea here is 
not to assume that $m^\e$ is the native velocity field $u^\e$. Letting it be a free unknown will enable us to find a more natural evolution 
equation for it, which does not involve any kinetic stresses, as in \cite{CTY2018}.

Plugging \eqref{e:module} into \eqref{e:VAe} and pairing the various similar terms together, we arrive at the following system 
\begin{align}\label{}
&\p_t \o^\e  = - \frac{1}{\e} \o^\e, & &\o^\e(0) = \e, \\
&\p_t m^\e + m^\e \cdot \n m^\e =   (\rho^\e u^\e)_\phi - m^\e \rho^\e_\phi  + \frac{1}{\e}( u^\e - m^\e),& & m^\e_0 = u_0, \label{e:m} \\
&\p_t g^\e + \n_x \cdot ( (\o^\e \xi + m^\e) g^\e )  =  \n_\xi \cdot ( (\xi \cdot \n m^\e +  \xi\rho^\e_\phi) g^\e) , & & g^\e_0 = g_0 ,
\label{e:ge}
\end{align}
with the scaling parameter having an explicit form
\begin{equation}\label{e:oe}
\o^\e = \e e^{-t / \e}.
\end{equation}

This system is coupled with the continuity and momentum equations for the pair $\rho^\e, u^\e$, which can be written in terms of $g^\e$ as 
follows
\begin{equation}\label{e:macroe}
\begin{split}
&\p_t \rho^\e + \n \cdot (u^\e \rho^\e)  = 0\\
&\p_t (\rho^\e u^\e) + \n \cdot (\rho^\e u^\e \otimes u^\e) - \n \cdot (\rho^\e (u^\e - m^\e) \otimes (u^\e - m^\e)) +  \n_x \cdot \cR_\e 
=  \rho^\e ((\rho^\e u^\e)_\phi - u^\e \rho^\e_\phi ),
\end{split}
\end{equation}
where
\[
\cR_\e  =(\o^\e)^2 \int_{\R^n} \xi \otimes  \xi  g^\e(t,x,\xi) \dxi.
\]

One unfortunate feature of the system is that the momentum of $m^\e$ is not conserved. So, it would be impossible to rewrite the $m$-
equation in divergence form. However, we can do it with an additional small linear in $m$ term. Indeed, multiplying 
\eqref{e:m} with $\rho^\e$ and noting that 
\[
\int_\O \xi g^\e(t,x,\xi)\dxi = \frac{\rho^\e}{\o^\e}(u^\e - m^\e),
\]
we obtain
\[
\rho^\e \p_t m^\e + \rho^\e m^\e \cdot \n m^\e = \p_t(\rho^\e  m^\e) + \n (\rho^\e  m^\e \otimes m^\e) 
+ \o^\e m^\e \n_x \cdot \int_\O \xi g^\e(t,x,\xi)\dxi.
\]

Let us gather the obtained equations in one full system.
\begin{equation*}\label{}
\boxed{
\begin{split}
&\hspace*{1.2in}\text{\bf Modulation of the  Vlasov-Alignment system }\\
& \o^\e  = \e e^{-t / \e},\\
&\p_t  \rho^\e + \n \cdot (u^\e \rho^\e)  = 0,\\
&\p_t  (\rho^\e u^\e) + \n \cdot (\rho^\e u^\e \otimes u^\e) - \n \cdot (\rho^\e (u^\e - m^\e) \otimes (u^\e - m^\e)) 
+  (\o^\e)^2 \n_x \cdot \int_{\R^n} \xi \otimes  \xi  g^\e \dxi =  \rho^\e ((\rho^\e u^\e)_\phi - u^\e \rho^\e_\phi ),\\
&\p_t (\rho^\e  m^\e) + \n (\rho^\e  m^\e \otimes m^\e) + \o^\e m^\e \n_x \cdot \int_\O \xi g^\e\dxi  = \rho^\e ((\rho^\e u^\e)_\phi 
- m^\e \rho^\e_\phi ) + e^{-t/\e} \int_\O \xi g^\e\dxi, \\
&\p_t  g^\e + \n_x \cdot ( (\o^\e \xi + m^\e) g^\e ) =  \n_\xi \cdot ( (\xi \cdot \n m^\e + \xi \rho^\e_\phi ) g^\e).
\end{split}}
\end{equation*}
 
We expect that in the limit as $\e\to 0$ the module converges to 
\[
\rho^\e \to \rho, \quad m^\e,u^\e \to u, \quad g^\e \to g
\]
where $(\rho,u,g)$ solves
\begin{equation}\label{e:limitginviscid}
\boxed{
\begin{split}
&\text{\bf Pressureless EAS } \text{\bf with the limiting profile equation}\\
&\p_t \rho+ \n\cdot ({u}{\rho}) = 0, \\
&\p_t u + u \cdot \n {u}  = (u \rho)_\phi - u \rho_\phi,\\
&\p_t g + \n_x \cdot  ( u g )  =  \n_\xi \cdot ( (\xi \cdot \n  u +  \xi \rho_\phi) g ).
\end{split}
}
\end{equation}
\smallskip

\begin{theorem}\label{t:Vlasov-g}
Suppose the macroscopic family of velocities is uniformly smooth,
\begin{equation}\label{e:gradu}
 \sup_{\e>0} \| \n u^\e\|_\infty + \| \n^2 u^\e\|_\infty <\infty.
\end{equation}
Suppose also that the initial profile $g_0$ has a compact support and finite order of contact with $0$, i.e. there exist $\a\in (0,1)$ and 
$C>0$ such that 
\begin{equation}\label{e:order}
|\n_x g_0 | \leq C g_0^\a.
\end{equation}
Let $[0,T]$ be a time interval of existence of a smooth solution $(u,\rho,g)$ to the system \eqref{e:limitginviscid}. Under these 
assumptions the module \eqref{e:module} remains uniformly smooth in $\e$ in the following sense
\begin{align}
\sup_{\e>0, t\in [0,T]} \|g^\e\|_{W^{1,\infty}} & <\infty,\\
\sup_{\e>0, t\in [0,T]}\|  m^\e\|_{W^{2,\infty}} & <\infty,\label{e:gradm} 
\end{align}
and  converges to its limiting profile weakly on $[0,T]$ with the following rates
\begin{align}\label{ }
W_1(\rho^\e, \rho)  & \leq C  \sqrt{\e}, \\
W_1(u^\e \rho^\e, u \rho) + W_1(m^\e \rho^\e, u \rho) & \leq C  \sqrt{\e}, \\
W_1(g^\e,g) &\leq \e^{\frac{\a}{4}}.
\end{align}
\end{theorem}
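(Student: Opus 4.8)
The plan proceeds in three stages: (a) uniform $W^{2,\infty}$-control of the modulated velocity $m^\e$; (b) uniform $W^{1,\infty}$-control of the profile $g^\e$; (c) the weak convergence rates, of which the one for $g^\e$ is the genuine difficulty. Throughout I assume, as is implicit, that the limiting density is non-vacuous (otherwise one renormalizes the velocities à la Favre as in the earlier work).

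\textbf{(a) Preliminaries and the modulated velocity.} First I would extract from \eqref{e:gradu} the same consequences used in the proof of \thm{t:hydroMax}: solving the continuity equation along characteristics keeps $\rho^\e$ bounded above and below, $\|\n^2 u^\e\|_\infty<C$ forces $\|\n\rho^\e\|_\infty<C$, and the maximum principle on the $v$-support of $f^\e$ bounds $\|u^\e\|_\infty$; hence $\rho^\e_\phi,(\rho^\e u^\e)_\phi$ and all their $x$-derivatives are uniformly bounded because convolution with the smooth $\phi$ needs no regularity of $\rho^\e$. Reading off the $L^\infty$-evolution of $m^\e,\n m^\e,\n^2 m^\e$ from \eqref{e:m}, and using $\int_\O\xi g^\e\dxi=\frac{\rho^\e}{\o^\e}(u^\e-m^\e)$ to see that $\frac1\e(u^\e-m^\e)$ is truly damping, one obtains Riccati-type inequalities $\dot x\le c_1+c_2x^2+c_3/\e-c_4x/\e$ (for $\n m^\e$) and its linearized version (for $\n^2 m^\e$); the barrier/contradiction argument of \thm{t:hydroMax} then yields \eqref{e:gradm}. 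Combined with \eqref{e:oe} this makes $m^\e$ exponentially close to $u^\e$ in $W^{1,\infty}$, and the $\xi$-support of $g^\e$ stays in a fixed ball $B_R$ since $\ddt|\Xi^\e|^2\le 2\|\n m^\e\|_\infty|\Xi^\e|^2$ along characteristics.

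\textbf{(b) Uniform smoothness of $g^\e$.} Equation \eqref{e:ge} is a linear continuity equation on $\T^n\times B_R$ whose field $b^\e=(\o^\e\xi+m^\e,\,-(\xi\cdot\n m^\e+\xi\rho^\e_\phi))$ is \emph{affine in $\xi$} and has smooth divergence $\n\cdot b^\e=-n\rho^\e_\phi$. The $\xi$-regularity of $g^\e$ is then controlled directly through the (smooth-in-$\xi$) flow, giving $\|\n_\xi g^\e\|_\infty\le C$. The $x$-regularity is the delicate point: $m^\e$ is only uniformly $W^{2,\infty}$ and there is no uniform bound on $\n^3 m^\e$, so $\|\n_x g^\e\|_\infty$ cannot be propagated through the flow Jacobian. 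This is where \eqref{e:order} enters: working with the renormalized profile $(g^\e)^{1-\a}$ — equivalently, tracking $|\n_x g^\e|$ relative to $(g^\e)^\a$, which \eqref{e:order} bounds initially — produces evolution equations whose source terms involve $b^\e$ only through $\n^2 m^\e$ (the $\xi$-linear coefficients drop to $\xi$-independent ones upon $\xi$-differentiation, and $\n\cdot b^\e$ is smooth), so the relevant norms close under a linear Grönwall inequality with $\e$-uniform coefficients. This propagates $|\n_x g^\e|\le C(g^\e)^\a$ on $[0,T]$ and yields $\sup_\e\|g^\e\|_{W^{1,\infty}}<\infty$.

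\textbf{(c) Convergence rates and the main obstacle.} For the density, \thm{t:hydrolim} gives $W_2(f^\e,f)\le C\sqrt\e$, and since marginalization does not increase $W_2$, $W_1(\rho^\e,\rho)\le W_2(\rho^\e,\rho)\le C\sqrt\e$. For the momentum, the uniform compact $v$-support makes $\varphi(x)v$ effectively Lipschitz on the supports of $f^\e,f$ (total momentum is conserved, so additive constants in $\varphi$ are harmless), so $W_1(u^\e\rho^\e,u\rho)\lesssim W_1(f^\e,f)\le W_2(f^\e,f)\lesssim\sqrt\e$; and $\|m^\e\rho^\e-u^\e\rho^\e\|_1=\o^\e\|\int\xi g^\e\dxi\|_1\lesssim\o^\e\lesssim\e$ settles the modulated momentum. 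For the profile I would couple the characteristics of \eqref{e:ge} with those of the limiting profile equation in \eqref{e:limitginviscid} from the common datum $g_0\dx\dxi$; since each profile measure is the push-forward of $g_0\dx\dxi$ along its own flow, $W_1(g^\e,g)\le\int(|X^\e-X|+|\Xi^\e-\Xi|)\,g_0\dx\dxi$, and a Grönwall estimate reduces this to $\int_0^t(\o^\e+\|m^\e-u\|_\infty+\|\n m^\e-\n u\|_\infty+\|\rho^\e_\phi-\rho_\phi\|_\infty)\ds$. Here $\o^\e$ and the $m^\e-u^\e,\n m^\e-\n u^\e$ parts are exponentially small; $\|\rho^\e_\phi-\rho_\phi\|_\infty=\|(\rho^\e-\rho)\ast\phi\|_\infty\lesssim W_1(\rho^\e,\rho)\lesssim\sqrt\e$ by smoothness of $\phi$; and $u^\e-u,\n u^\e-\n u$ are controlled by interpolating the uniform $C^2$-bound \eqref{e:gradu} against the weak smallness $\|u^\e-u\|_{W^{-1,\infty}}+\|\rho^\e-\rho\|_{W^{-1,\infty}}\lesssim\sqrt\e$ (the former from the rates above, uniform non-vacuity and $\|\n\rho^\e\|_\infty<C$). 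Feeding these into \GL, with the interpolation calibrated against the order-of-contact exponent $\a$ of \eqref{e:order}, gives $W_1(g^\e,g)\lesssim\e^{\a/4}$. The hard part is exactly this last step: unlike the Fokker--Planck scheme there is no entropy/Fisher-information dissipation to lean on, so $g^\e\to g$ must be pried out entirely from the \emph{weak} $O(\sqrt\e)$ convergence of the macroscopic fields, which enter \eqref{e:ge} through $\n m^\e\approx\n u^\e$; converting that weak convergence into the pointwise $C^1$-type control the characteristic ODEs demand, while keeping $\|g^\e\|_{W^{1,\infty}}$ uniform without any bound on $\n^3 m^\e$, is precisely what forces the $(g^\e)^{1-\a}$-renormalization and produces the $\a$-dependent rate.
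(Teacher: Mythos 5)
Your step (a) and the macroscopic rates follow the paper's argument, but you misplace the role of the order-of-contact hypothesis \eqref{e:order}. The uniform bound $\sup_\e\|g^\e\|_{W^{1,\infty}}<\infty$ does not require it, and is not obstructed by the absence of a bound on $\n^3 m^\e$: differentiating the characteristic system $\dot X^\e=\o^\e\Sigma^\e+m^\e(t,X^\e)$, $\dot\Sigma^\e=-\Sigma^\e\cdot\n m^\e(X^\e)-\rho^\e_\phi(X^\e)\Sigma^\e$ with respect to the initial point $\o$ produces a linear system whose coefficients involve only $\n m^\e$, $\n^2 m^\e$ and $\n\rho^\e_\phi$, all uniformly bounded once \eqref{e:gradm} is in hand. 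Since $g^\e$ is the push-forward of $g_0$ with the explicit Jacobian weight $\exp\{n\int_0^t\rho^\e_\phi(X^\e(s,\o))\ds\}$, inverting the uniformly bi-Lipschitz flow gives the Lipschitz bound on $g^\e$ directly; a bound on $\n^3m^\e$ would only be needed for \emph{second} derivatives of $g^\e$, which are not claimed. Your $(g^\e)^{1-\a}$ renormalization is therefore solving a non-problem.

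The genuine gap is in step (c), the rate for $W_1(g^\e,g)$. You reduce the coupled-characteristics Gr\"onwall estimate to $\|m^\e-u\|_\infty$ and $\|\n m^\e-\n u\|_\infty$ and propose to obtain these by interpolating the uniform $C^2$ bound \eqref{e:gradu} against the weak $O(\sqrt\e)$ smallness. This cannot deliver the stated rate: the exponents in any Gagliardo--Nirenberg-type interpolation are fixed by the dimension $n$, so at best you would get $\|u^\e-u\|_\infty\lesssim\e^{c(n)}$ and a still smaller power for the gradient; there is no mechanism by which the exponent $\a$ of \eqref{e:order} --- a condition on $g_0$, not on $u^\e$ --- can ``calibrate'' such an interpolation. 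It also requires a uniform lower bound on $\rho^\e$, which the theorem does not assume. The paper instead works entirely with the $g_0$-weighted squared distances $W_x=\int_\domain|X^\e-X|^2g_0\domega$ and $W_\xi=\int_\domain|\Sigma^\e-\Sigma|^2g_0\domega$, for which only $\rho^\e$-weighted $L^2$ control of $m^\e-u^\e$ and $u^\e-u$ is needed. The single term where pointwise control of $\n m^\e-\n u$ would seem necessary, namely $\int_\domain(\Sigma^\e-\Sigma)\cdot\Sigma\cdot(\n m^\e(X)-\n u(X))\,g_0\domega$, is handled by writing $\n m^\e(X)-\n u(X)=(\n_x X)^{-\top}\n_x\bigl(m^\e(X)-u(X)\bigr)$ and integrating by parts in the initial variable; the derivative then lands either on uniformly bounded flow quantities or on $g_0$, and it is precisely here that \eqref{e:order} enters, via $|\n_x g_0|\leq Cg_0^\a$ and H\"older, producing a contribution $\lesssim(\e+\eta_\e)^{\a/2}$ with $\eta_\e=\int_\domain|v-m^\e|^2f^\e\dx\dv\leq\e^2$. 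Gr\"onwall then gives $W_x+W_\xi\lesssim\e^{\a/2}$ and $W_1(g^\e,g)\leq\sqrt{W_x}+\sqrt{W_\xi}\lesssim\e^{\a/4}$. Without this integration by parts against $g_0$, your scheme does not close and the exponent $\a/4$ is unexplained.
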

\begin{proof}
Let us discuss regularity first. 

Since the initial support of $g_0$ is bounded, then so are the supports of $f_0^\e$ uniformly,
\begin{equation}\label{ }
\supp f^\e_0 \ss \O \times B_R(0), \qquad \forall \e>0.
\end{equation}
Next, looking at the $v$-characteristic
\begin{equation*}\label{}
\begin{split}
\dot{X^\e} & = V^\e \\
\dot{V^\e}& = \int_\O  \phi(X^\e-{X^\e}')({V^\e}' - V^\e) f^\e_0(\o') \domega' 
+  \frac{1}{\e}\frac{\int_{\R^n} (v - V^\e) f^\e (X^\e,v,t)\dv}{\rho^\e(X^\e)}, 
\end{split}
\end{equation*}
where $\o = (x,v)$, one can see by the maximum principle that the support of $f^\e$ will remain within its initial bounds in $v$. So,
\begin{equation}\label{ }
\supp f^\e \ss \O \times B_R(0), \qquad \forall \e, t >0.
\end{equation}
This implies, in particular, that $u^\e$ remains bounded as well
\[
|u^\e(x,t)| = \frac{\left| \int_{\R^n}v f^\e (x,v,t)\dv \right|}{\int_{\R^n} f^\e (x,v,t)\dv} \leq \max |\supp f^\e|.
\]
Looking back at the $m$-equation, \eqref{e:m} we can see that $m^\e$ remains uniformly bounded:
\begin{equation}\label{ }
\p_t \|  m^\e\|_\infty \leq   \|  \st_{\rho^\e} \ave{u^\e}\|_\infty + \frac{1}{\e}( \|u^\e\|_\infty - \|  m^\e\|_\infty ) \leq c_1 
+ \frac{1}{\e}(c_2 - \|  m^\e\|_\infty).
\end{equation}
By \GL, we have
\begin{equation}\label{e:minfty}
\|  m^\e\|_\infty \leq c.
\end{equation}

As to its gradients, we have
\[
\p_t \| \n m^\e\|_\infty \leq c \| \n m^\e\|_\infty^2 + \| \n( u^\e \rho^\e)_\phi \|_\infty + \|  \n \rho^\e_\phi\|_\infty \|  m^\e\|_\infty 
+  \|  \n m^\e\|_\infty  + \frac{1}{\e}(\| \n u^\e\|_\infty - \| \n m^\e\|_\infty).
\]
Denoting $x =  \| \n m^\e\|_\infty$, \eqref{e:gradu}, and \eqref{e:minfty} we have the following inequality
\[
\dot{x} \leq c_1+ c_2 x^2 + \frac{c_3}{\e}(1  - x).
\]
We know that initially $x_0$ remains uniformly bounded, say $x_0 \leq c_0$. We claim that $x \leq 2c_0+1$ on the given time interval 
$[0, T]$.  Indeed,  let $t^*$ be a hypothetical first time when $x(t^*) = 2c_0+1$. So, $x(t) < 2c_0+1$, for $t<t^*$. We then have
\[
\dot{x} \leq c_1+ c_2 (2c_0+1)^2 + \frac{c_3}{\e}(1  - x),
\]
and by \GL,
\[
x(t) \leq c_0 + \frac{c_1+ c_2 (2c_0+1)^2 + \frac{c_3}{\e}}{c_3 / \e} < c_0 + 1+\frac12 c_0,
\]
provided $\e$ is small enough.  Hence, $t^*$ does not exist.  Doing a similar computation for the second-order derivatives and relying on 
the assumption \eqref{e:gradu}, we obtain a uniform bound on the second gradient. 

By the transport nature of the equation for $g^\e$, \eqref{e:ge}, we can see that at any time $g^\e$ is a push-forward of the initial 
$g^\e_0$ along the characteristics
\begin{equation*}\label{}
\begin{split}
\dot{X^\e} & = \o^\e \Sigma^\e + m^\e(t,X^\e), \\
\dot{\Sigma^\e} & = - \Sigma^\e \cdot \n m^\e(t,X^\e) - \rho^\e_\phi(X^\e) \Sigma^\e.
\end{split}
\end{equation*}
With \eqref{e:gradm} at hand, we can see that the supports of $g^\e$ remain uniformly bounded in $\e$ as characteristics emanating from the 
initial support of $g_0$ will remain bounded. 

Let us notice that the Jacobian of the characteristic map is given by
\[
\det \n_{\o} (X^\e,\Sigma^\e) (t,\o) = \exp \left\{  -n \int_0^t \rho^\e_\phi(X^\e(s,\o)) \ds \right\}.
\]
Then for any $t>0$,
\begin{equation}\label{ }
g^\e(t, X^\e(t,\o),\Sigma^\e(t,\o)) = g_0(\o) \exp \left\{ n \int_0^t \rho^\e_\phi(X^\e(s,\o)) \ds \right\}.
\end{equation}
Inverting the flow and noting that $(X^\e,\Sigma^\e)$ and $\rho^\e_\phi$ are Lipschitz implies $g^\e \in \Lip(\domain)$ uniformly.

Let us now discuss the limit $g^\e \to g$.  Since we will seek to justify this limit in the weak sense, let us fix $h$, $\Lip(h) \leq 1$, 
and follow the characteristics
\[
\int_\domain h(\o) (g^\e - g) \domega = \int_\domain [h(X^\e(t,\o),\Sigma^\e(t,\o)) - h(X(t,\o),\Sigma(t,\o))] g_0(\o)\domega,
\]
where $(X,\Sigma)$ is the characteristics of the $g$-equation:
\begin{equation}\label{e:SXsys}
\begin{split}
\dot{X} & =  u(t,X), \\
\dot{\Sigma} & = - \Sigma \cdot \n u(t,X) - \rho_\phi(X) \Sigma.
\end{split}
\end{equation}
Continuing,
\[
\left| \int_\domain h(\o) (g^\e - g) \domega  \right| \leq  \int_\domain [|(X^\e(t,\o) -X(t,\o)|+ |\Sigma^\e(t,\o)) 
- \Sigma(t,\o)|] g_0(\o)\domega.
\]
It will be more convenient to work with the square metrics, so we denote
\begin{equation*}\label{}
\begin{split}
W_x  &= \int_\domain |X^\e(t,\o) -X(t,\o)|^2 g_0(\o)\domega \\
W_\xi  &= \int_\domain |\Sigma^\e(t,\o) -\Sigma(t,\o)|^2 g_0(\o)\domega
\end{split}
\end{equation*}
According to the above, we have
\begin{equation}\label{e:Wgg}
W_1(g^\e,g)  \leq \sqrt{W_x} +\sqrt{W_\xi}.
\end{equation}

Our goal is now to show that $W_x , W_\xi \to 0$. We will rely on the following two estimates established in \cite{S-EA}:
\begin{align}\label{e:W1ed}
W_1(\rho^\e, \rho)  & \leq C  \sqrt{\e}, \\
 \int_{\domain} | v - u(x,t)|^2 f^\e(x,v,t) \dx \dv & \leq C  \e.
\end{align}
The latter quantity dominates the macroscopic relative entropy, so we have
\begin{equation}\label{e:mre}
 \int_{\domain} | u^\e - u|^2 \rho^\e \dx  \leq C  \e.
\end{equation}

Let us compute derivatives 
\begin{equation*}\label{}
\begin{split}
\dot{W}_x =  2 \int_\domain (X^\e -X)\cdot (\o^\e \Sigma^\e+m^\e(X^\e) - u(X))  g_0(\o)\domega
\end{split}
\end{equation*}
Here and throughout, we will be using the fact that $\Sigma^\e$ characteristics emanating from the support of $g_0$ are uniformly bounded. 
Thus,
\begin{equation*}\label{}
\begin{split}
\dot{W}_x  & \leq c_1 \e + W_x + \int_\domain |m^\e(X^\e) - u(X)|^2  g_0(\o)\domega \\ 
& \leq c_1 \e + W_x +\int_\domain |m^\e(X^\e) - u^\e(X^\e)|^2  g_0(\o)\domega  +\int_\domain |u^\e(X^\e) - u(X^\e)|^2  g_0(\o)\domega \\
&+ \int_\domain |u(X^\e) - u(X)|^2  g_0(\o)\domega \\
&\leq c_1 \e + c_2 W_x +\int_\domain |m^\e(x) - u^\e(x)|^2  g^\e(t,\o)\domega  +\int_\domain |u^\e(x) - u(x)|^2  g^\e(t,\o)\domega \\
&\leq c_1 \e + c_2 W_x +\int_\O |m^\e - u^\e|^2  \rho^\e\dx  +\int_\O |u^\e - u|^2  \rho^\e\dx \\
\intertext{and according to \eqref{e:mre},}
&\dot{W}_x\leq (c_1+C) \e + c_2 W_x +\int_\O |m^\e - u^\e|^2  \rho^\e\dx .
\end{split}
\end{equation*}
The mean deviation $\int_\O |m^\e - u^\e|^2  \rho^\e\dx$ is bounded by the internal energy centered at $m^\e$:
\[
 \int_\O |m^\e - u^\e|^2  \rho^\e\dx \leq \int_\domain |v - m^\e|^2 f^\e \dx \dv : = \eta_\e,
\]
which will be treated separately. We have obtained
\begin{equation}\label{ }
\dot{W}_x \lesssim \e +  W_x + \eta^\e.
\end{equation}

Let us continue to $W_\xi$ and using uniform bounds \eqref{e:gradu}, \eqref{e:gradm},
\begin{equation*}\label{}
\begin{split}
\frac12 \dot{W}_\xi  &=   \int_\domain (\Sigma^\e -\Sigma)\cdot ( - \Sigma^\e \cdot \n m^\e(X^\e) - \rho^\e_\phi(X^\e) \Sigma^\e 
+\Sigma \cdot \n u(X) + \rho_\phi(X) \Sigma ) g_0(\o)\domega \\
&=   \int_\domain (\Sigma^\e -\Sigma)\cdot ( \Sigma- \Sigma^\e) \cdot \n m^\e(X^\e) g_0(\o)\domega 
+  \int_\domain (\Sigma^\e -\Sigma)\cdot  \Sigma \cdot (\n m^\e(X^\e) - \n m^\e(X)) g_0(\o)\domega\\
&+ \int_\domain (\Sigma^\e -\Sigma)\cdot  \Sigma \cdot (\n m^\e(X) - \n u(X)) g_0(\o)\domega\\
&- \int_\domain |\Sigma^\e -\Sigma|^2 \rho^\e_\phi(X^\e)  g_0(\o)\domega - \int_\domain (\Sigma^\e -\Sigma)\cdot \Sigma (\rho^\e_\phi(X^\e) 
- \rho^\e_\phi(X))g_0(\o)\domega  \\
&- \int_\domain (\Sigma^\e -\Sigma)\cdot \Sigma (\rho^\e_\phi(X) - \rho_\phi(X))g_0(\o)\domega\\
& \lesssim W_\xi + \sqrt{W_\xi W_x} + \int_\domain (\Sigma^\e -\Sigma)\cdot  \Sigma \cdot (\n m^\e(X) - \n u(X)) g_0(\o)\domega 
+ \sqrt{W_\xi W_x} + \int_\O |(\rho^\e - \rho)_\phi| \rho \dx.
\end{split}
\end{equation*}
Let us look into the two unresolved terms. The last one is simply bounded by a constant multiple of $W_1(\rho^\e,\rho) \lesssim \sqrt{\e}$ 
simply because $\phi$ is a smooth kernel and the convolution with $\phi$ is dominated by the $W_1$-metric.

The middle unresolved term will be estimated by relating the velocities of the gradients. To this end, we observe
\[
\n m^\e(X) - \n u(X) =  (\n_x X)^{-\top}\n_x (m^\e(X) - \n u(X))
\]
so, integrating by parts,
\begin{equation*}\label{}
\begin{split}
\int_\domain (\Sigma^\e -\Sigma)\cdot  \Sigma \cdot (\n m^\e(X) - \n u(X)) g_0(\o)  \domega 
= - \int_\domain [m^\e(X) -  u(X)] \n_x[  (\n_x X)^{-\top} (\Sigma^\e -\Sigma)\cdot  \Sigma  g_0] \domega.
\end{split}
\end{equation*}
The moved gradient will fall on either $\xi$-characteristics, which is uniformly bounded given \eqref{e:gradm}, or on $(\n_x X)^{-\top} $ 
which also is bounded by regularity of the $(X,\Sigma)$-system, or it falls on $g_0$. So, in the first two cases, we have an estimate by 
\begin{equation*}\label{}
\begin{split}
\lesssim & \int_\domain |m^\e(X) -  u(X)|g_0 \domega = \int_\O |m^\e - u| \rho \dx\\
 = & \int_\O |m^\e - u| (\rho - \rho^\e) \dx + \int_\O |m^\e - u| \rho^\e \dx \\
\lesssim & W_1(\rho^\e, \rho) + \int_\O |m^\e - u^\e| \rho^\e \dx + \int_\O |u^\e - u| \rho^\e \dx \lesssim \sqrt{\e} + \sqrt{\eta_\e}.
\end{split}
\end{equation*}
And in the last case, we have by  \eqref{e:order}
\begin{equation*}\label{}
\begin{split}
\lesssim &  \int_\domain |m^\e(X) -  u(X)||\n_x g_0| \domega \leq \int_\domain |m^\e(X) -  u(X)|g_0^\a \domega \\
 \leq & |\supp g_0|^{1-\a}\left( \int_\domain |m^\e(X) -  u(X)|^{\frac{1}{\a} }g_0\domega \right)^{\a}.
\end{split}
\end{equation*}
Noting that $1/\a >1$, we simply reduce the power down to $1$ by using uniform boundedness of velocities, and the resulting term is bounded 
by the one we estimated in the previous step. So, we obtain
\[
\lesssim (\e + \eta_\e)^{\frac{\a}{2}}.
\]
To summarize, we have arrived at the following system
\begin{equation}\label{e:WW}
\begin{split}
\dot{W}_x &\lesssim  W_x + \e + \eta_\e, \hspace{0.7in} W_x(0) = 0 \\
 \dot{W}_\xi & \lesssim W_\xi + W_x +  (\e + \eta_\e)^{\frac{\a}{2}},  \quad W_\xi(0) = 0.
\end{split}
\end{equation}

It remains to address the internal energy $\eta_\e$. Let us notice that 
\[
\eta_\e = (\o^\e)^2 \int_\domain |\xi|^2 g^\e \dx \dxi.
\]
We have
\[
\ddt  \int_\domain |\xi|^2 g^\e \dx \dxi = -  \int_\domain(\xi \cdot \n m^\e \cdot \xi +  |\xi|^2 \rho^\e_\phi )g^\e\dx \dxi \leq C \int_\domain |\xi|^2 g^\e\dx \dxi.
\]
So, the integral remains uniformly bounded and hence,
\[
\eta_\e \leq \e^2.
\]
The system \eqref{e:WW} now reads
\begin{equation}\label{e:WW2}
\begin{split}
\dot{W}_x &\lesssim  W_x + \e , \hspace{0.6in} W_x(0) = 0 \\
 \dot{W}_\xi & \lesssim W_\xi + W_x +  \e ^{\frac{\a}{2}},  \quad W_\xi(0) = 0,
\end{split}
\end{equation}
and by \GL, 
\[
W_x + W_\xi \leq C \e^{\a/2}.
\]
Plugging into \eqref{e:Wgg} finishes the result. 
\end{proof}

\begin{remark} Let us notice that the macroscopic momentum of $g$ satisfies
\begin{equation}\label{}
\begin{split}
&{\rho} m = \int_{\R^n} \xi g(t,x,\xi)\dxi \\
&\p_t {\rho} + \n\cdot (u {\rho}) = 0, \\
&\p_t m + m  \cdot \n u +u \cdot \n m = - {\rho}_\phi m.
\end{split}
\end{equation}
So, if initially $g_0$ is well-centered, i.e., $m_0 \equiv 0$, then it will remain centered for all times by uniqueness.
\end{remark}

\section{Qualitative behavior of the modulated solution to the Vlasov Scheme}\label{s:uni}
This section is devoted to studying the qualitative features of the modulated solution of the Vlasov scheme, \eqref{e:limitginviscid}.  
For uni-directional flocks introduced in \cite{LS-uni1}, the $\xi$-characteristics will collapse to zero thanks to the additional 
conservation law, the $e$-quantity, see below and \cite{TT2014,CCTT2016}.  This analysis relies on bounding $e$ from below, which is not 
possible in more general situations. We provide a numerical illustration in the one-dimensional (1-D) case to reinforce these findings.  

\subsection{Characteristic squeezing for uni-directional flocks}
%The $x$-characteristics of \eqref{e:limitginviscid} are given by 
%	\begin{align}
%		\ddt {X_x} &= \bu,  & X_1(s,s,x,\xi) &= x, 
%	\end{align}
The characteristics for the $g$-equation in \eqref{e:limitginviscid} are given by \eqref{e:SXsys}.
%\begin{align} 
%	\label{eqn:xi_characteristics}
%	\ddt \Sigma &= - \bxi \cdot \n \bu - \bxi \rho_{\phi}, & \bxi(s,s,x,\xi) &= \xi.  
%\end{align}
The flock is called uni-directional if the velocity field points in one direction,
\[
U(t,x) = u(t,x) \bar{d}, \qquad \bar{d} \in \cS^{n-1}, u: \R^+ \times \Omega^n \mapsto \R.
\]
The maximum principle implies that the velocity remains uni-directional for all time.  
By rotation invariance, we can assume that $\bar{d}$ is the unit vector $e_1$, i.e. $U(t,x) = (u(t,x), 0, \dots, 0)$, so 
\eqref{e:limitginviscid} becomes 
\begin{equation}\label{e:unisys}
	\begin{split}
		&\p_t \rho+ \partial_{x_1} ({u}{\rho})  = 0, \\
		&\p_t u + u \partial_{x_1} {u}   = (u \rho)_\phi - u \rho_\phi,\\
		&\p_t g + \partial_{x_1}  ( u g )   =  \n_\xi \cdot ( (\xi \cdot \n U +  \xi \rho_\phi) g ),
	\end{split}
\end{equation}
where 
\[
	\xi \cdot \n U = (\xi \cdot \n u, 0, \dots, 0). 
\]
In the following, let $c>0$ denote a constant which can change line to line. 
\[
	e = \partial_{x_1} u + \rho_{\phi}, \qquad \partial_t e + \partial_{x_1} (e u) = 0 . 
\] 
It has been established in \cite{ST2} in 1-D and \cite{LS-uni1} for unidirectional flocks, that if $e_0$ and $\rho_0$ are smooth and bounded 
away from zero, then they will remain so uniformly in time, and  $\| \n u \|_{\infty} < C$.  

Expanding the $\xi$-characteristics of \eqref{e:unisys} for each component, we obtain 
\begin{equation*}
	\begin{split}
		\ddt \Sigma_1 &= -\big( \partial_{x_1} u + \rho_{\phi} \big) \Sigma_1 + \partial_{x_2} u \Sigma_2  + \dots + \partial_{x_n} u \Sigma_n \\
		\ddt \Sigma_i &= - \Sigma_i \rho_{\phi}, \quad i=2,\ldots,n.
	\end{split}
\end{equation*}
Since $0 < c < e, \rho$ and $\| \n u \|_{\infty} < C$, we obtain 
\begin{equation*}
	\begin{split}
		\ddt \Sigma_1 &\leq  -c \Sigma_1 + C \big( \Sigma_2  + \dots +  \Sigma_n \big), \\
		\ddt \Sigma_i &= - c \Sigma_i , \quad i=2,\ldots,n.
	\end{split}
\end{equation*}
By \GL, we conclude that the $\xi$-characteristics exponentially decay to zero. The exponential squeezing of characteristics is observed 
numerically in Section \ref{sec:numerics}. 

\subsection{Symmetry points of the Euler alignment system} 
In the 1D numerical solution, we observe that $\rho$ and $u$ are symmetric about these zeros of $u$-- specifically, $\rho$ is even and $u$ 
is odd about these zeros. The following computation proves it.  

Let $x^*$ be a point such that $u(x^*) = 0$.  WLOG, set $x^* = 0$.  
Let $r(t,x) = \rho(t,-x)$ and $v(t,x) = -u(t, -x)$.  For brevity, we will omit the time parameter $t$ as it does not affect the 
computations. 
Using the identities 
\[
	\n r(x) = - (\n \rho)(-x), \qquad \n v (x) =  (\n u) (-x),
\]
we obtain
\begin{align*}
	\partial_t r(x)
		&= \partial_t \rho(-x) =  - (\n \cdot  (u \rho)) (-x)    \\
		&= - \big( (\n \cdot u) \rho + u \cdot \nabla \rho \big)(-x) \\
		&= - \big( (\n \cdot  v) r + v \cdot \n r \big)(x) \\
		&= - \n \cdot \big( v r ) (x) ,
\end{align*}
and 
\begin{align*}
	\partial_t v(x) 
		&= -\partial_t u(-x) \\
		&=  \big( u \cdot \n u \big)(-x) - \big( (u \rho)_{\phi} - u \rho_{\phi} \big) (-x) \\
		&= - \big( v \cdot \nabla v \big) (x) - \big( (u \rho)_{\phi} - u \rho_{\phi} \big) (-x)
\end{align*}
The convolution integral is given by 
\begin{align*}
	\big( (u \rho)_{\phi} - u \rho_{\phi} \big) (-x) 
		&= \int_{\Omega} \phi(-x-y) (u(y) - u(-x)) \rho(y) dy  \\
		&=  \int_{\Omega} \phi(z-x) (u(-z) - u(-x)) \rho(-z) dz \\
		&=  - \int_{\Omega}  \phi(z-x) (v(z)  - v(x) )  r(z) dz \\
		&=  - \int_{\Omega}  \phi(x-z) (v(z)  - v(x) )  r(z) dz  = -\big( (v r)_{\phi} - v r_{\phi} \big) (x) . 
\end{align*}
We used the symmetry of the kernel in the last line. 
We obtain 
\[
	\partial_t v(x) + v \cdot \nabla v =   (v r)_{\phi} - v r_{\phi} .
\]
Thus, if the initial density and velocity are even and odd, respectively, then by uniqueness they $\rho = r$ satisfy the same equation and 
that $u =v$.

\subsection{Numerical illustration of exponential squeeze}
\label{sec:numerics}
To illustrate the analytical findings described in this section, we compute numerical solutions to the limiting inviscid system 
\eqref{e:limitginviscid} using a constant interaction kernel, $\phi(|x|) = 1$, and the following initial data:
\begin{align*}
	\rho_0(x) = 1, \qquad g_0(x,\xi) =  \frac{1}{\sqrt{2\pi \sigma}} \exp{\{ -\xi^2 / (2\sigma) \}}, \qquad \sigma =\frac{1}{10}. 
\end{align*}
These simulations are not intended as definitive numerical studies but serve only as visual support for the theoretical behavior observed 
earlier.

We examine two cases for the initial velocity field—one possessing odd symmetry around its zeros and another lacking such a structure—to 
highlight the influence of symmetry on the evolution dynamics, particularly in the formation of density concentration (squeezing).

In the first case, we consider a velocity profile with odd symmetry about its zeros:
\[
u_0(x) = \frac{1}{3\pi} \cos(2\pi x). 
\]
The corresponding simulation results are displayed in Figure 1. The top row shows the evolution of $g(x,\xi,t)$, while the bottom plots 
display the evolution of $\rho(x,t)$ and $u(x,t)$. As expected, the odd symmetry of $u_0$ is preserved: the velocity remains odd and the 
density remains even with respect to the symmetry center. Squeezing of $g$ develops symmetrically in regions of steep negative velocity 
gradient, leading to the formation of sharp, localized structures. This coherent behavior underscores the stabilizing influence of symmetry 
in the dynamics.

In the second case, we take an initial velocity profile, which does not exhibit odd symmetry:
\[
u_0(x) = \frac{1}{4} \Big( \frac{1}{2\pi} \sin(2\pi x) + \frac{1}{3\pi} \cos(2\pi x)  + \frac{1}{2\pi} \sin(4\pi x)  
+ \frac{1}{4\pi} \cos(4\pi x)  + \frac{1}{8\pi} \sin(6\pi x)  + \frac{1}{5\pi} \cos(6\pi x)  \Big) .
\]
The results are shown in Figure \ref{num_soln:g_nonsym}. In this case, while squeezing of $g$ still occurs due to localized regions of 
strong negative velocity gradient, the evolution is more irregular and lacks the organized structure seen in the symmetric case. The 
locations, shapes, and intensities of the density peaks vary more widely, and the behavior is less predictable over time.

Again, we emphasize that these numerical simulations are not exhaustive but are meant to illustrate and support the analytical insights 
regarding symmetry preservation, structure formation, and the onset of density concentration.

\begin{remark}
The initial data satisfies $e_0 = \partial_x u_0 + (\rho_0)_{\phi} > 0$ and therefore the solutions exist globally in time. 
In the numerical solution, a spike appears in the density, but this is a numerical artifact due to the fact that it lies in a 
finite-dimensional space.  
\end{remark} 
\begin{remark}
Our solutions are computed on the torus, so we obtain similar qualitative behavior when the kernel is bounded below, e.g., 
$\phi(|x|) = \frac{1}{(1 + |x|^2)^{\beta/2}}$. 
We chose $\phi = 1$ for computational simplicity. 
\end{remark} 
\begin{figure}[!h]
  \begin{center}
          \begin{tabular}{cccc | cc }
              \includegraphics[width=0.25\linewidth]{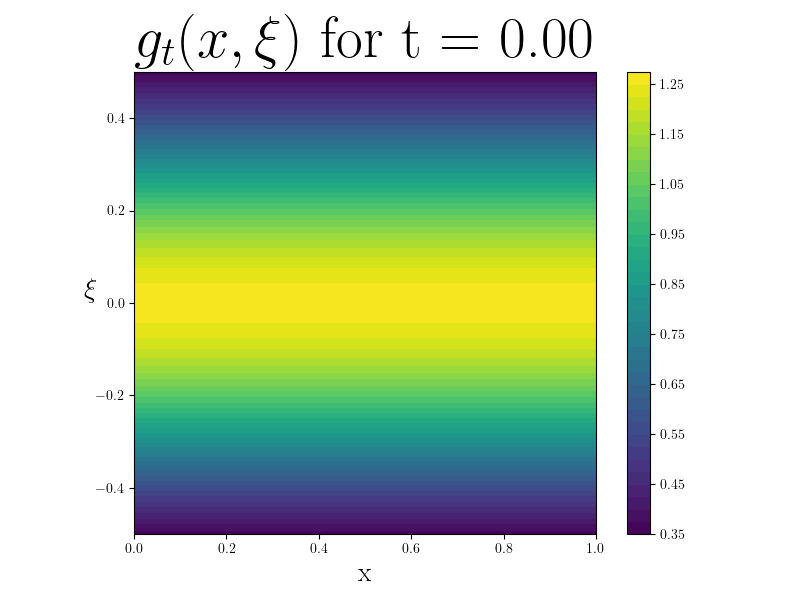} 
              \includegraphics[width=0.25\linewidth]{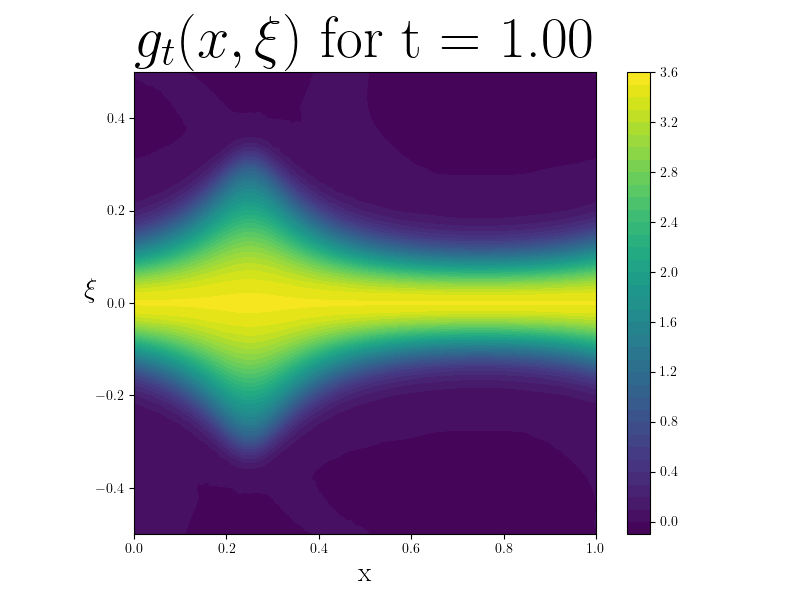}
              \includegraphics[width=0.25\linewidth]{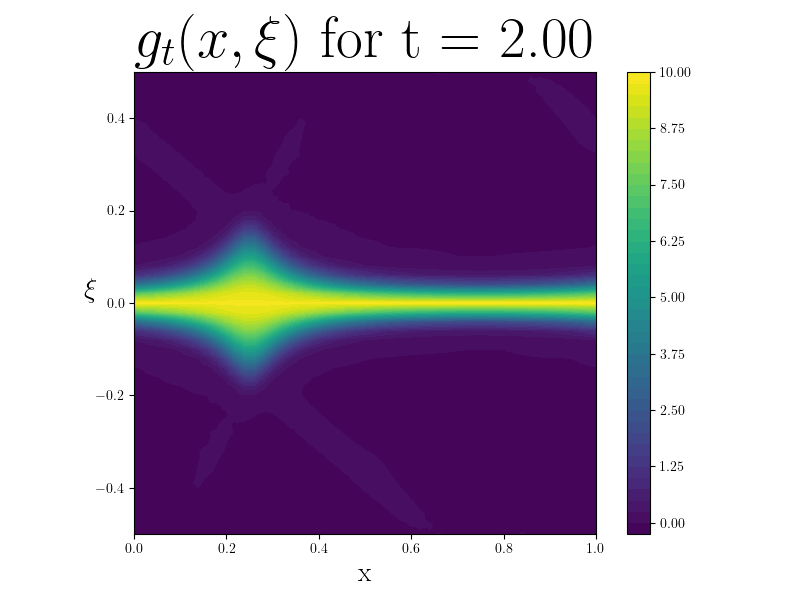} 
              \includegraphics[width=0.25\linewidth]{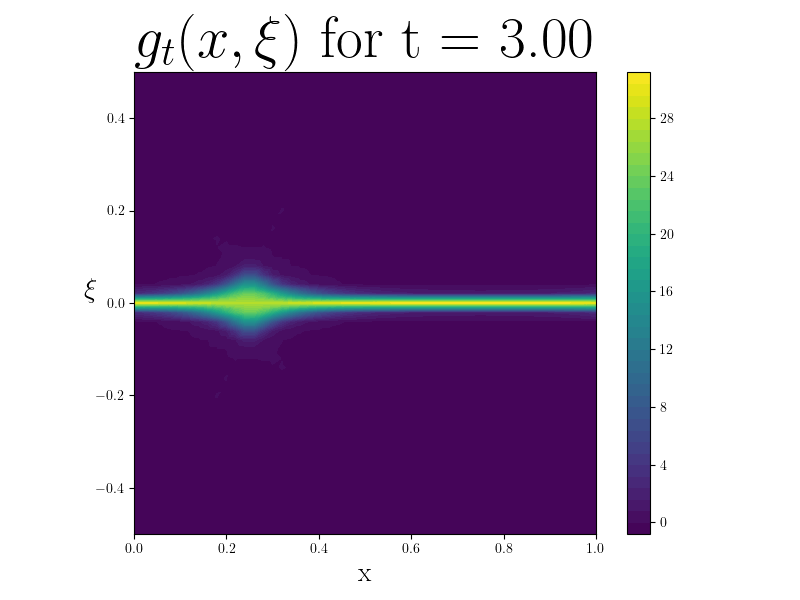}  \\
              \includegraphics[width=0.5\linewidth]{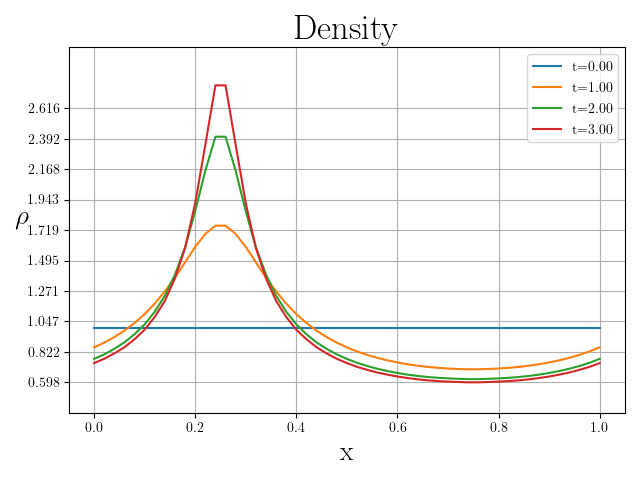}
              \includegraphics[width=0.5\linewidth]{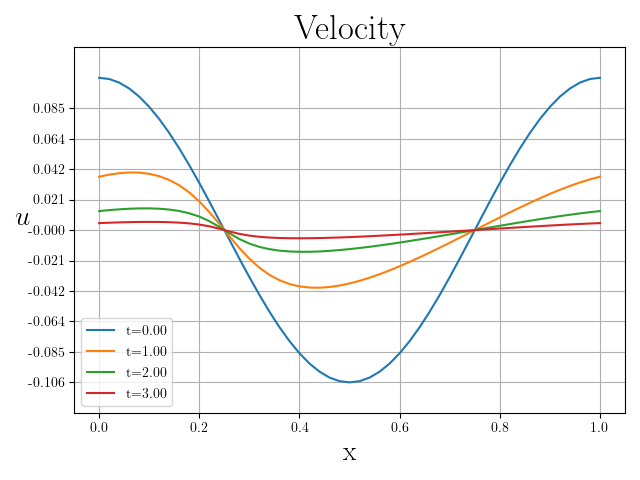}
          \end{tabular}
  \end{center} 
\caption{Numerical solution of $(\rho, u, g)$ when $u_0$ has an odd symmetry around its zeros.  The contour plot of $g$ is shown at four 
equally spaced time points.  Time moves left to right, and the leftmost image is $g_0(x,\xi)$.   For the density and velocity, the initial 
profile is shown in blue, and the remaining time points are depicted in the colorbar.  \label{num_soln:g}}
\end{figure}
\begin{figure}[!h]
  \begin{center}
          \begin{tabular}{cccc | cc}
              \includegraphics[width=0.25\linewidth]{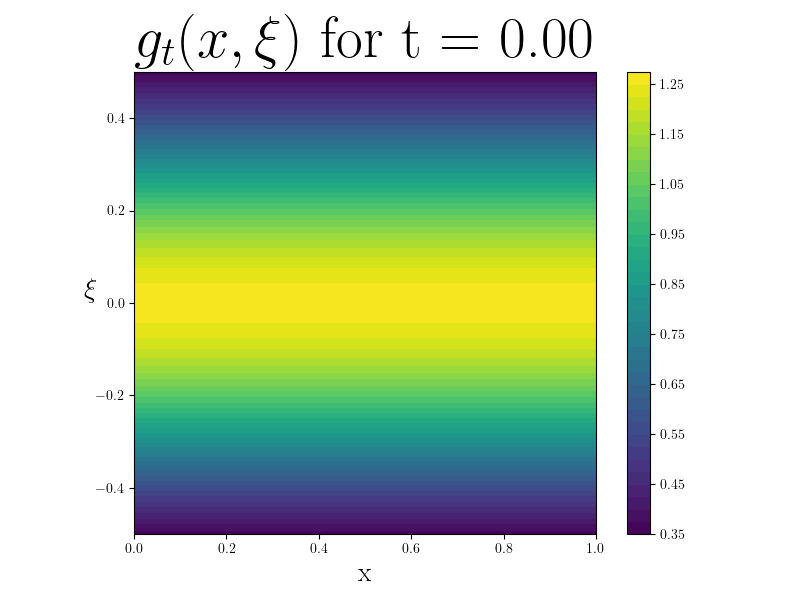}
              \includegraphics[width=0.25\linewidth]{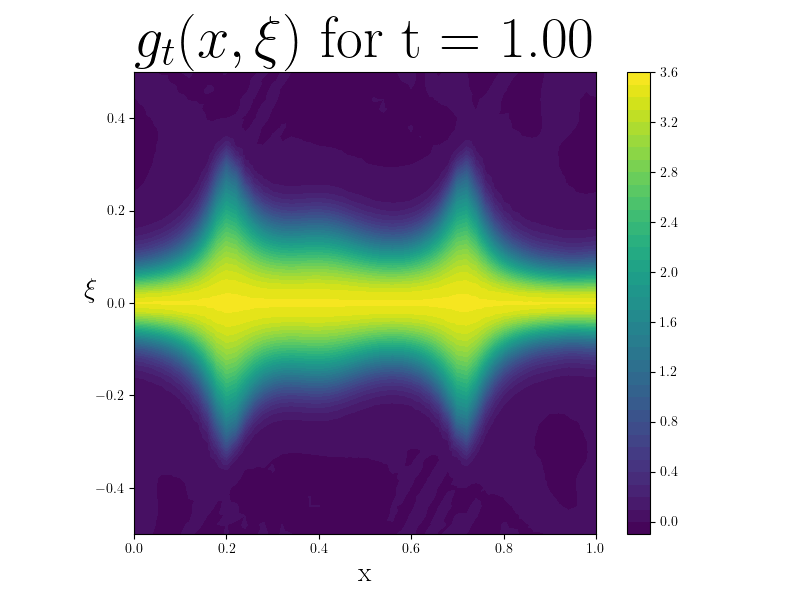}
              \includegraphics[width=0.25\linewidth]{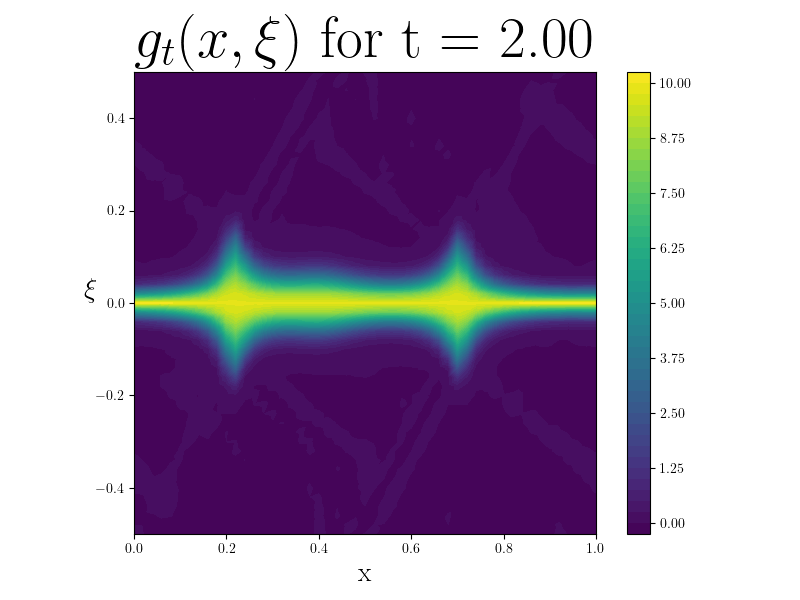}
              \includegraphics[width=0.25\linewidth]{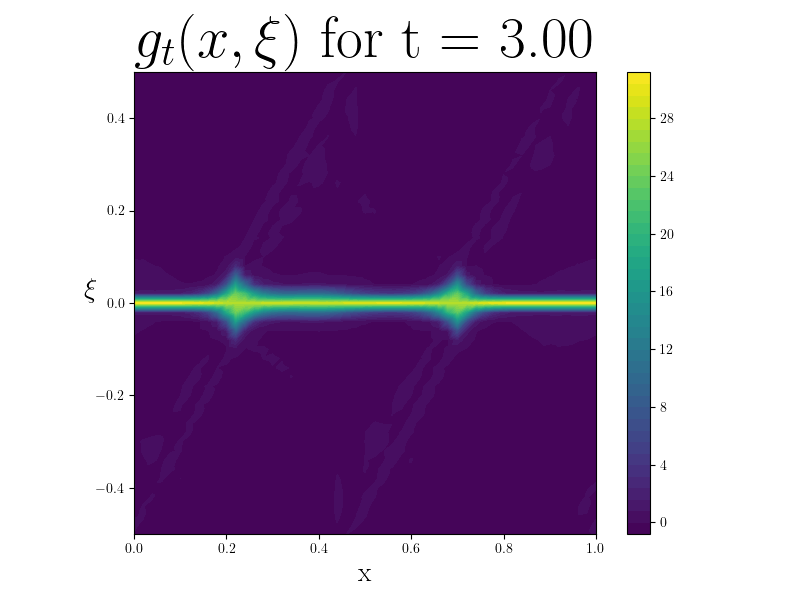} \\
              \includegraphics[width=0.5\linewidth]{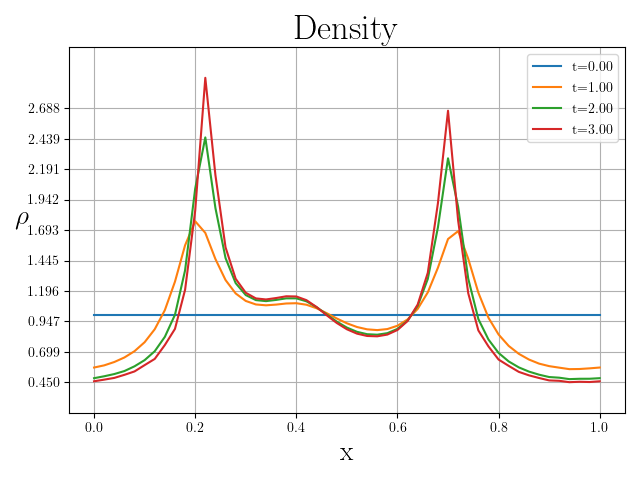}
              \includegraphics[width=0.5\linewidth]{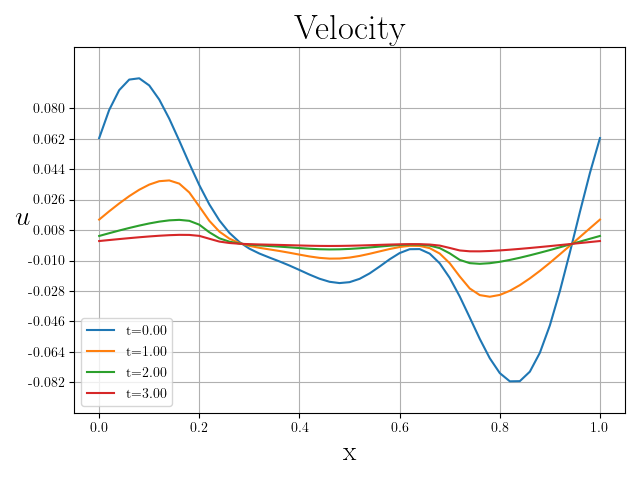} 
          \end{tabular}
  \end{center} 
\caption{Numerical solution of $(\rho, u, g)$ when $u_0$ has no symmetry around its zeros.  The contour plot of $g$ is shown at four equally 
spaced time points.  Time moves left to right, and the leftmost image is $g_0(x,\xi)$.   For the density and velocity, the initial profile 
is shown in blue, and the remaining time points are depicted in the colorbar.  \label{num_soln:g_nonsym}}
\end{figure}

\bibliographystyle{alpha}
\bibliography{collective,shvydkoy,kinetic,analysis&pde,extra_references}

\end{document}